\newtheorem{theorem}{Theorem}[section]
\newtheorem{lemma}[theorem]{Lemma}
\newtheorem{cor}[theorem]{Corollary}
\newtheorem{example}{Example}
\theoremstyle{definition}
\newtheorem{definition}{Definition}
\numberwithin{equation}{section} 
\def\qed{\hfill$\Box$\vspace{12pt}}
\long\def\delete#1{}
\begin{document}
\title {Laplacian pair state transfer in Q-graph}

\author{Ming Jiang$^{a,b}$,~Xiaogang Liu$^{a,b,c,}$\thanks{Supported by the National Natural Science Foundation of China (No. 12371358) and the Guangdong Basic and Applied
Basic Research Foundation (No. 2023A1515010986).}~$^,$\thanks{ Corresponding author. Email addresses: mjiang@mail.nwpu.edu.cn, xiaogliu@nwpu.edu.cn,
wj66@mail.nwpu.edu.cn}~,~Jing Wang$^{a,b}$
\\[2mm]
{\small $^a$School of Mathematics and Statistics,}\\[-0.8ex]
{\small Northwestern Polytechnical University, Xi'an, Shaanxi 710072, P.R.~China}\\
{\small $^b$Research \& Development Institute of Northwestern Polytechnical University in Shenzhen,}\\[-0.8ex]
{\small Shenzhen, Guandong 518063, P.R. China}\\
{\small $^c$Xi'an-Budapest Joint Research Center for Combinatorics,}\\[-0.8ex]
{\small Northwestern Polytechnical University, Xi'an, Shaanxi 710129, P.R. China}\\
}
\date{}

\openup 0.5\jot
\maketitle

\begin{abstract}
In 2018, Chen and Godsil proposed the concept of Laplacian perfect pair state transfer which is a brilliant generalization of Laplacian perfect state transfer. In this
paper, we study the existence of Laplacian perfect pair state transfer in the Q-graph of an $r$-regular graph for $r\ge2$. We prove that the Q-graph of an $r$-regular graph
does not have Laplacian perfect pair state transfer when $r+1$ is prime or a power of $2$. We also give a sufficient condition for Q-graph to have Laplacian pretty good pair state transfer.

\smallskip

\emph{Keywords:} Laplacian perfect pair state transfer; Laplacian pretty good pair state transfer; Q-graph.

\emph{Mathematics Subject Classification (2010):} 05C50, 81P68
\end{abstract}

\section{Introduction}

Recently, quantum state transfer in graphs has got a lot of attention, since the study of quantum state transfer in graphs will provide a theoretical basis for the
construction of quantum communication networks with quantum state transfer, which also has a potential role in quantum information processing and quantum computing
\cite{BOSE2}.

Let $G$ be a simple undirected graph with the adjacency matrix $A_G$ and the degree matrix $D_G$. The Laplacian matrix of $G$ is denoted that $L_{G}=D_G -A_G $. The
\emph{transition matrix} of $G$ relative to $L_{G}$ is defined by
$$
U_{L_{G}}(t) = \exp(-\mathrm{i}tL_{G})=\sum_{k=0}^{\infty}\frac{(-\mathrm{i})^{k}L_{G}^{k}t^{k}}{k!}, ~ t \in \mathbb{R},~\mathrm{i}=\sqrt{-1}.
$$
Suppose that $u$ and $v$ are a pair of vertices of $G$. We use $\mathbf{e}_u,\mathbf{e}_v \in \mathbb{R}^{n}$ to denote the characteristic vectors of $u$ and $v$,
respectively. Let $\mathbf{e}_{u}^{\top}$ denote the transpose of $\mathbf{e}_{u}$. Based on the so-called \emph{XYZ}-model, $|\mathbf{e}_{u}^{\top}U_{L_G}(t)\mathbf{e}_{v}|$
represents the probability of the amount of information carried by a quantum state transfer in $G$ from the vertex $u$ to the vertex $v$ at time $t$ \cite{BOSE1}. If there
exists a time $\tau$  such that
$$
|\mathbf{e}_{u}^{\top}U_{L_G}(\tau)\mathbf{e}_{v}|=1,
$$
then $G$ is said to have \emph{Laplacian perfect state transfer} (short for LPST) between $u$ and $v$ at time $\tau$. Up until now, many results about LPST have been given.
For example, see \cite{Ack, Alvi, BOSE1, Coutinho11, Kirk, LiY1, LiY2, WJ}.


For a pair of vertices $a$ and $b$ of a graph $G$, we use $\mathbf e_{a}-\mathbf e_{b}$ to represent the \emph{pair state} of $\left \{ a,b \right \}$. If $a$ is adjacent to
$b$ in $G$, then $\mathbf e_{a}-\mathbf e_{b}$ is called \emph{edge state}. For two distinct pair states $\mathbf{e}_{a}-\mathbf{e}_{b}$ and $\mathbf{e}_{c}-\mathbf{e}_{d}$ of
a graph $G$, if there exists a time $\tau$ and a complex scalar $\chi $ with $\left|\chi \right|=1$ satisfying that
\begin{align*}
U_{L_{G}}(\tau)(\mathbf{e}_{a}-\mathbf{e}_{b})=\chi (\mathbf{e}_{c}-\mathbf{e}_{d}),
\end{align*}
which is equivalent to
\begin{align*}
\left|\frac{1}{2}(\mathbf e_{a}-\mathbf e_{b})^\top e^{-\mathrm{i}\tau L_{G}}(\mathbf e_{c}-\mathbf e_{d})\right|^{2}=1,
\end{align*}
then we say $G$ has \emph{Laplacian perfect pair state transfer} (short for Pair-LPST) between $\mathbf{e}_{a}-\mathbf{e}_{b}$ and $\mathbf{e}_{c}-\mathbf{e}_{d}$ at time
$\tau $. This concept was first proposed by Chen and Godsil in 2018 \cite{QCh18}. It has been show in \cite{ChG19} that for a certain number of vertices, there are more graphs
having Pair-LPST than LPST, which means that Pair-LPST is more flexible than LPST. But for all that, there are also a lot of graphs do not have Pair-LPST. In \cite{liu2023},
Wang, Liu and Wang proposed the definition of \emph{Laplacian pretty good pair state transfer} (short for Pair-LPGST), which is  a relaxation of Pair-LPST. We say a graph $G$
has Pair-LPGST between $\mathbf e_{a}-\mathbf e_{b}$ and $\mathbf e_{c}-\mathbf e_{d}$ if for any $\epsilon>0$, there exists a time $\tau$ such that
\begin{align*}
\left|\frac{1}{2}(\mathbf e_{a}-\mathbf e_{b})^\top e^{-\mathrm{i}\tau L_{G}}(\mathbf e_{c}-\mathbf e_{d})\right|^{2}>1-\epsilon.
\end{align*}
Note that if $\mathbf e_{a}-\mathbf e_{b}$ and $\mathbf e_{c}-\mathbf e_{d}$ are edge states, then Pair-LPST and Pair-LPGST can be called Edge-LPST and Edge-LPGST,
respectively.

The study of Pair-LPST and Pair-LPGST is a very new topic and there are only few results now. In 2020, Chen and Godsil gave some useful and important results on Pair-LPST
\cite {ChG19}. In 2021, Cao gave a characterization of cubelike graphs having Edge-LPST and they gave methods to obtain some classes of infinite graphs having Edge-LPST \cite
{CAO}. In 2022, Cao and Wan gave a characterization of abelian Cayley graphs having Edge-LPST \cite {CAO2}. In 2023, Wang, Liu and Wang gave sufficient conditions for vertex
corona to have or not have Pair-LPST and they also gave a sufficient condition for vertex corona to have Pair-LPGST  \cite {liu2023}.

In this paper, we investigate the existence of Pair-LPST and Pair-LPGST in the Q-graph of a regular graph.

\begin{definition}\label{Q} (Cvetkovi{\'c} et al.\cite{Q-gr})
The \emph{Q-graph} of a graph $G$, denoted by $\mathcal{Q} \left ( G \right )$, is the graph obtained from $G$ by inserting a new vertex into each edge of $G$, and joining by
edges those pairs of new vertices which lie on adjacent edges of $G$. We denote the set of such new vertices by $I(G)$.
\end{definition}

The paper is organized as follows. In Section \ref{Sec:pre}, we list some useful results. In Section \ref{Sec:Q},  we study the existence of Pair-LPST in the Q-graph of an
$r$-regular graph for $r\ge2$. We prove that the Q-graph of an $r$-regular graph does not have Pair-LPST when $r+1$ is prime or a power of $2$. In Section
\ref{Sec:LPST3-3}, we give a
sufficient condition for Q-graph to have Pair-LPGST.


\section{Preliminaries}\label{Sec:pre}


For a graph $G$ with Laplacian matrix $L_G$. Let $\theta_0> \theta_1>\cdots>\theta_p=0 $ be all distinct Laplacian eigenvalues with multiplicity $l_r$, $r=0,1,\ldots,p$. Let
$\left\{\mathbf{x}_{1}^{(r)}, \ldots, \mathbf{x}_{l_r}^{(r)}\right\}$ denote the orthonormal basis of the eigenspace of $\theta_{r}$. The set of all distinct Laplacian eigenvalues is denoted by $\mathrm{{Spec}}_{L}(G)$. We use $\mathbf{x}^H$ to denote the conjugate transpose of a column vector $\mathbf{x}$. For each $\theta_{r}$ of
$\mathrm{{Spec}}_{L}(G)$, we define
$$
F_{\theta_r} = \sum\limits_{i=1}^{l_r}\mathbf{x}_i^{(r)} \left(\mathbf{x}_i^{(r)}\right)^H,
$$
which is called the eigenprojector corresponding of $\theta_{r}$. Note that $\sum\limits_{r=0}^pF_{\theta_r}=I$, where $I$ is the identity matrix. Then
\begin{equation}
\label{spect1}
L_G=\sum_{r=0}^{p}\theta_r F_{\theta_r},
\end{equation}
which is called the \emph{spectral decomposition} of $L_G$ with respect to the distinct eigenvalues. Note that $F_{\theta_r}^{2}=F_{\theta_r}$ and
$F_{\theta_r}F_{\theta_s}=\mathbf{0}$ for $r\neq s$, where $\mathbf{0}$ denotes the zero matrix. By $(\ref{spect1})$, we have
\begin{equation}
\label{spect2}
U_{L_G}(t)=\sum_{r=0}^{p}\exp(-it\theta _{r})F_{{\theta_r}}.
\end{equation}
The \emph{Laplacian eigenvalue support} of pair state $\mathbf{e}_{a}-\mathbf{e}_{b}$ of $G$ is defined that
$$
\mathrm{{supp}}_{L_G}(\mathbf{e}_{a}-\mathbf{e}_{b})=\left \{ \theta \mid  \theta \in \mathrm{{Spec}}_{L}(G),\ F_\theta\mathbf(\mathbf{e}_{a}-\mathbf{e}_{b})\ne \mathbf{0}
\right \}.
$$
Let $\mathbf{e}_{a}-\mathbf{e}_{b}$ and $\mathbf{e}_{c}-\mathbf{e}_{d}$ be two pair states of $G$. If
$$
F_\theta\mathbf(\mathbf{e}_{a}-\mathbf{e}_{b})=\pm F_\theta\mathbf(\mathbf{e}_{c}-\mathbf{e}_{d}),
$$
for each eigenvalue $\theta$ of $L_G$, then we say $\mathbf{e}_{a}-\mathbf{e}_{b}$ and $\mathbf{e}_{c}-\mathbf{e}_{d}$ are \emph{Laplacian strongly cospectral}. Denote that
$$
\Lambda^{+}_{ab,cd}=\left \{ \theta \mid  \theta \in \mathrm{{supp}}_{L_G}(\mathbf{e}_{a}-\mathbf{e}_{b}),\ F_\theta\mathbf(\mathbf{e}_{a}-\mathbf{e}_{b})=F_\theta\mathbf(\mathbf{e}_c-\mathbf{e}_d) \right
\},
$$
and
$$
\Lambda^{-}_{ab,cd}=\left \{ \theta \mid  \theta \in \mathrm{{supp}}_{L_G}(\mathbf{e}_{a}-\mathbf{e}_{b}),\ F_\theta\mathbf(\mathbf{e}_{a}-\mathbf{e}_{b})=-F_\theta\mathbf(\mathbf{e}_c-\mathbf{e}_d) \right
\}.
$$

The following result gives a necessary and sufficient condition for a graph to have Pair-LPST.

\begin{lemma}\emph{(See \cite[Theorem~3.9]{ChG19})}\label{Coutinho}
Let $\mathbf{e}_a-\mathbf{e}_b$ and $\mathbf{e}_c-\mathbf{e}_d$ be two distinct pair states in a graph $G$. Set
$S=\mathrm{supp}_{L_G}(\mathbf{e}_{a}-\mathbf{e}_{b})=\left \{ \theta_{r} \mid  0\le r \le k \right\}$ with $\theta_{0}\in \Lambda^{+}_{ab,cd}$. Then $G$ has Pair-LPST between $\mathbf{e}_a-\mathbf{e}_b$ and $\mathbf{e}_c-\mathbf{e}_d$ in $G$ if and only if all of
the following hold:
\begin{itemize}
\item[\rm (a)]  $\mathbf{e}_a-\mathbf{e}_b$ and $\mathbf{e}_c-\mathbf{e}_d$ are Laplacian strongly cospectral;
 \item[\rm (b)] The eigenvalues in $S$ are either all integers or all quadratic integers. Moreover, there is a square-free integer $\Delta$ such that each $\theta \in S$ is
    a quadratic integer in $\mathbb{Q}(\sqrt{\Delta})$, and the difference of any two eigenvalues in $S$ is an integer multiple of $\sqrt{\Delta}$. Here, we allow $\Delta=1$ for the case where all eigenvalues in $S$ are integers;
\item[\rm (c)] Let $g=\gcd\left(\left\{\frac{\theta_{0}-\theta_{r}}{\sqrt{\Delta}}\right\}^{k}_{r=0}\right)$. Then
\begin{itemize}
\item[\rm(i)] $\theta_{r}\in\Lambda^{+}_{ab,cd}$ if and only if $\frac{\theta_{0}-\theta_{r}}{g\sqrt{\Delta}}$ is even, and
 \item[\rm(ii)] $\theta_{r}\in\Lambda^{-}_{ab,cd}$ if and only if $\frac{\theta_{0}-\theta_{r}}{g\sqrt{\Delta}}$ is odd.
\end{itemize}
\end{itemize}

 If these conditions hold and Pair-LPST occurs between $\mathbf{e}_a-\mathbf{e}_b$ and $\mathbf{e}_c-\mathbf{e}_d$ at time $\tau$, then the minimum time is
 $\tau_0=\frac{\pi}{g\sqrt{\Delta}}$.
\end{lemma}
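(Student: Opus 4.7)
The plan is to expand the defining equation
$$U_{L_G}(\tau)(\mathbf{e}_a-\mathbf{e}_b)=\chi(\mathbf{e}_c-\mathbf{e}_d)$$
by means of the spectral decomposition (\ref{spect2}) and to read off conditions (a)--(c) by projecting on each eigenspace. Applying $F_{\theta_s}$ to both sides and using $F_{\theta_s}^2=F_{\theta_s}$ together with $F_{\theta_s}F_{\theta_r}=\mathbf{0}$ for $r\ne s$, the equation decouples into one identity per eigenvalue:
$$e^{-i\tau\theta_s}F_{\theta_s}(\mathbf{e}_a-\mathbf{e}_b)=\chi F_{\theta_s}(\mathbf{e}_c-\mathbf{e}_d).$$
A first consequence is that the two supports must coincide, i.e.\ $\mathrm{supp}_{L_G}(\mathbf{e}_a-\mathbf{e}_b)=\mathrm{supp}_{L_G}(\mathbf{e}_c-\mathbf{e}_d)=S$. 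Next, because each $F_{\theta_s}$ is a real symmetric projector, both projected vectors are real, so the scalar $\chi e^{i\tau\theta_s}$ relating them must be real of modulus one, hence $\pm 1$. This is exactly the strong-cospectrality requirement (a). Using the hypothesis $\theta_0\in\Lambda^+_{ab,cd}$ to normalise $\chi=e^{-i\tau\theta_0}$, the projected identity simplifies to $e^{i\tau(\theta_0-\theta_r)}=\pm 1$ for every $\theta_r\in S$, with the sign matching whether $\theta_r\in\Lambda^+_{ab,cd}$ or $\Lambda^-_{ab,cd}$.

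I will then convert these phase conditions into arithmetic constraints on the spectrum. From the previous step, $\tau(\theta_0-\theta_r)/\pi\in\mathbb{Z}$ for every $r$, so all nonzero differences $\theta_0-\theta_r$ are pairwise rational multiples of one another. Combining this commensurability with the fact that the $\theta_r$ are algebraic integers (being eigenvalues of the integer matrix $L_G$), a standard algebraic-integer / Galois argument on the number field $\mathbb{Q}(\theta_0,\ldots,\theta_k)$ forces either $\theta_r\in\mathbb{Z}$ for all $r$ (the case $\Delta=1$), or a unique square-free $\Delta>1$ with $\theta_r\in\mathbb{Q}(\sqrt{\Delta})$ and each $\theta_0-\theta_r=n_r\sqrt{\Delta}$ for some $n_r\in\mathbb{Z}$. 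This is condition (b). Setting $g=\gcd\{n_r\}$, the smallest $\tau>0$ simultaneously making every $\tau n_r\sqrt{\Delta}/\pi$ an integer is $\tau_0=\pi/(g\sqrt{\Delta})$; at this $\tau_0$ the parity of $n_r/g$ dictates the sign on the right-hand side, which must agree with the $\Lambda^\pm_{ab,cd}$ assignment, yielding condition (c) together with the minimum-time formula. Conversely, if (a), (b), (c) all hold, then with $\chi=e^{-i\tau_0\theta_0}$ the projected equations are satisfied at $\tau_0$ and reassemble into Pair-LPST through (\ref{spect2}).

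The main obstacle will be the algebraic step inside condition (b): upgrading rational commensurability of the $\theta_0-\theta_r$ to \emph{integer} coefficients $n_r$ together with a unique square-free $\Delta$. The underlying fact is that any Galois automorphism of $\overline{\mathbb{Q}}$ permutes the algebraic-integer eigenvalues of $L_G$, and the commensurability of differences is incompatible with such a permutation beyond a degree-$2$ extension of $\mathbb{Q}$; hence $\mathbb{Q}(\theta_0,\ldots,\theta_k)/\mathbb{Q}$ has degree at most two. The integer structure of the $n_r$ then follows from the fact that the algebraic integers inside a real quadratic field form a $\mathbb{Z}$-lattice of rank two, with any surviving rational denominator absorbed into the eventual $g=\gcd\{n_r\}$. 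Once this algebraic backbone is in place, the rest of the argument --- phases, parities, and the converse reassembly --- is careful but routine bookkeeping through the spectral decomposition.
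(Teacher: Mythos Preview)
The paper does not prove this lemma: it is quoted verbatim from Chen and Godsil \cite[Theorem~3.9]{ChG19} as a preliminary result, with no argument supplied. There is therefore nothing in the present paper to compare your proposal against.

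For what it is worth, your outline is the standard route to this result (the vertex-state version goes back to Coutinho's thesis \cite{Coutinho14}, and Chen--Godsil adapt it to pair states). The decomposition via the idempotents $F_{\theta_s}$, the extraction of strong cospectrality from the reality of $F_{\theta_s}(\mathbf e_a-\mathbf e_b)$, and the reduction to the phase constraints $e^{i\tau(\theta_0-\theta_r)}=\pm1$ are all correct and match the literature. Your identification of the Galois/algebraic-integer step as the crux is accurate; one point to tighten is the claim that ``any surviving rational denominator [is] absorbed into the eventual $g$'': when $\Delta\equiv 1\pmod 4$ the ring of integers of $\mathbb Q(\sqrt\Delta)$ is $\mathbb Z[(1+\sqrt\Delta)/2]$, so a priori the differences could be half-integer multiples of $\sqrt\Delta$, and one needs the commensurability of the $\theta_0-\theta_r$ together with the fact that their rational parts vanish to force genuine integrality of the $n_r$. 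This is routine but should not be glossed over.
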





Recall Lemma \ref{Coutinho} (c) that if $G$ has Pair-LPST between two distinct pair states $\mathbf{e}_a-\mathbf{e}_b$ and $\mathbf{e}_c-\mathbf{e}_d$, then eigenvalues in $S$
can be divided into $\Lambda^{+}_{ab,cd}$ and $\Lambda^{-}_{ab,cd}$. We wonder if one of $\Lambda^{\pm}_{ab,cd}$ can be empty. If one of $\Lambda^{\pm}_{ab,cd}$ is empty, say
$\Lambda^{-}_{ab,cd}$ is empty, then we have
$$
\mathbf{e}_a-\mathbf{e}_b=\sum_{r=0}^pF_{\theta_r}(\mathbf{e}_a-\mathbf{e}_b)=\sum_{\theta_r\in S}F_{\theta_r}(\mathbf{e}_a-\mathbf{e}_b)=\sum_{\theta_r\in
S}F_{\theta_r}(\mathbf{e}_c-\mathbf{e}_d)=\mathbf{e}_c-\mathbf{e}_d,
$$
a contradiction to the fact that $\mathbf{e}_a-\mathbf{e}_b$ and $\mathbf{e}_c-\mathbf{e}_d$ are two distinct pair states. Similarly, we can also get a contradiction if
$\Lambda^{+}_{ab,cd}$ is empty. Thus, neither of $\Lambda^{\pm}_{ab,cd}$ can be empty, which implies that $\left |S  \right | \ge2$ . Then we get the following lemma which is
a generalization of \cite[Theorem~5.1.4]{QCh18}.


\begin{lemma}\label{support}
Let $\mathbf e_{a}-\mathbf e_{b}$ be a pair state of $G$ such that the eigenvalue support of $\mathbf e_{a}-\mathbf e_{b}$ has size one. Then $\mathbf e_{a}-\mathbf e_{b}$
does not have Pair-LPST in $G$.
\end{lemma}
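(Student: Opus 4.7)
The plan is to argue by contradiction, leveraging the dichotomy $S=\Lambda^{+}_{ab,cd}\cup\Lambda^{-}_{ab,cd}$ that Lemma \ref{Coutinho} forces on any Pair-LPST pair, together with the observation already recorded in the paragraph preceding the lemma that neither of these two sets can be empty.

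Concretely, I would suppose for contradiction that Pair-LPST does occur from $\mathbf{e}_a-\mathbf{e}_b$ to some pair state $\mathbf{e}_c-\mathbf{e}_d\ne \mathbf{e}_a-\mathbf{e}_b$ in $G$. Lemma \ref{Coutinho}(a) then says the two pair states are Laplacian strongly cospectral, so for every $\theta\in\mathrm{Spec}_L(G)$ we have $F_\theta(\mathbf{e}_a-\mathbf{e}_b)=\pm F_\theta(\mathbf{e}_c-\mathbf{e}_d)$, and in particular $S=\mathrm{supp}_{L_G}(\mathbf{e}_a-\mathbf{e}_b)$ splits as the disjoint union $\Lambda^{+}_{ab,cd}\sqcup\Lambda^{-}_{ab,cd}$.

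Next I would invoke the non-emptiness argument already spelled out in the paragraph before the lemma: if, say, $\Lambda^{-}_{ab,cd}=\varnothing$, then expanding in eigenprojectors gives
\[
\mathbf{e}_a-\mathbf{e}_b=\sum_{\theta\in S}F_\theta(\mathbf{e}_a-\mathbf{e}_b)=\sum_{\theta\in S}F_\theta(\mathbf{e}_c-\mathbf{e}_d)=\mathbf{e}_c-\mathbf{e}_d,
\]
contradicting distinctness; the symmetric argument rules out $\Lambda^{+}_{ab,cd}=\varnothing$. Hence both sets are non-empty, so $|S|\ge 2$, which contradicts the hypothesis $|S|=1$.

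There is really no obstacle here: the lemma is an immediate consequence of Lemma \ref{Coutinho} combined with the strong-cospectrality decomposition of the support, and the ``main work'' has in fact already been done in the discussion preceding the statement. The only thing to be careful about is to explicitly point out that $S$ is the \emph{disjoint} union of $\Lambda^{\pm}_{ab,cd}$ (so that $|S|\ge 2$ follows from both parts being non-empty), and to note that the choice of partner pair state $\mathbf{e}_c-\mathbf{e}_d$ is forbidden to equal $\mathbf{e}_a-\mathbf{e}_b$ by the definition of Pair-LPST between two \emph{distinct} pair states.
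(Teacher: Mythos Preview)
Your proposal is correct and is essentially the same argument the paper gives: the paragraph immediately preceding the lemma already establishes that neither $\Lambda^{+}_{ab,cd}$ nor $\Lambda^{-}_{ab,cd}$ can be empty via the eigenprojector expansion, whence $|S|\ge 2$, and the lemma is stated as a direct consequence of this. Your added remark about disjointness of $\Lambda^{+}_{ab,cd}$ and $\Lambda^{-}_{ab,cd}$ on $S$ is a harmless clarification of what the paper leaves implicit.
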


To study the existence of Pair-LPGST in $\mathcal{Q} \left ( G \right )$, we need the Kronecker's Approximation Theorem stated as follows.
\begin{lemma}
\emph{(See \cite[Theorem~442]{Hw})}
\label{H-W}
Let $1,\lambda_1,\ldots,\lambda_m$ be linearly independent over $\mathbb{Q}$. Let $\alpha_1,\ldots,\alpha_m$ be arbitrary real numbers and $\epsilon$ be a positive real
number. Then there exist integers $l$ and  $q_1,\ldots, q_m$ such that
\begin{equation}
\label{KroApp}
|l\lambda_k-\alpha_k-q_k|<\epsilon,
\end{equation}
for $k=1,\ldots,m$.
\end{lemma}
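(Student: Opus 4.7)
The plan is to reduce the statement to a topological density fact: the subgroup $\{(l\lambda_1,\ldots,l\lambda_m)\bmod 1 : l\in\mathbb{Z}\}$ is dense in the torus $\mathbb{T}^m=(\mathbb{R}/\mathbb{Z})^m$. Once density is established, for any target $(\alpha_1,\ldots,\alpha_m)\in\mathbb{R}^m$ and any $\epsilon>0$ we can pick an integer $l$ whose orbit point lies within $\ell^\infty$-distance $\epsilon$ of the image of $(\alpha_1,\ldots,\alpha_m)$ in $\mathbb{T}^m$, which is exactly the existence of integers $q_1,\ldots,q_m$ with $|l\lambda_k-\alpha_k-q_k|<\epsilon$ for every $k$.

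To prove density, I would introduce the group homomorphism $\phi:\mathbb{Z}\to\mathbb{T}^m$ defined by $\phi(l)=(l\lambda_1,\ldots,l\lambda_m)\bmod 1$, and let $H$ be the closure of $\phi(\mathbb{Z})$ in $\mathbb{T}^m$. Then $H$ is a closed subgroup of the compact abelian group $\mathbb{T}^m$, so by Pontryagin duality it equals $\mathbb{T}^m$ if and only if every continuous character of $\mathbb{T}^m$ that is trivial on $H$ is itself trivial. The characters are parametrised by $\mathbb{Z}^m$ via $\chi_{\mathbf{n}}(\mathbf{x})=\exp(2\pi \mathrm{i}(n_1x_1+\cdots+n_mx_m))$, so I would check: if $\chi_{\mathbf{n}}$ vanishes on every $\phi(l)$, then $\exp(2\pi\mathrm{i} l(n_1\lambda_1+\cdots+n_m\lambda_m))=1$ for all $l\in\mathbb{Z}$, forcing $n_1\lambda_1+\cdots+n_m\lambda_m\in\mathbb{Z}$. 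The hypothesis that $1,\lambda_1,\ldots,\lambda_m$ are linearly independent over $\mathbb{Q}$ then kills all but the trivial relation, i.e.\ $n_1=\cdots=n_m=0$. Hence $H=\mathbb{T}^m$, completing the density argument.

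The main obstacle is justifying the Pontryagin-duality step cleanly; this is where the abstract harmonic analysis input enters. A self-contained alternative I would fall back on is to use Weyl's equidistribution criterion: it suffices to show that $\frac{1}{N}\sum_{l=1}^{N}\exp(2\pi\mathrm{i}\,l(n_1\lambda_1+\cdots+n_m\lambda_m))\to 0$ as $N\to\infty$ for every nonzero $\mathbf{n}\in\mathbb{Z}^m$. Summing the geometric progression gives a bound $O\bigl(\|e^{2\pi\mathrm{i}(n_1\lambda_1+\cdots+n_m\lambda_m)}-1\|^{-1}/N\bigr)$, and this denominator is nonzero precisely because $n_1\lambda_1+\cdots+n_m\lambda_m\notin\mathbb{Z}$, which is again the $\mathbb{Q}$-linear independence hypothesis. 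Since the statement is the classical Kronecker approximation theorem (Hardy--Wright, Theorem~442), in the paper itself one simply cites it, and this sketch is meant only to indicate why the hypothesis of $\mathbb{Q}$-linear independence is the decisive ingredient.
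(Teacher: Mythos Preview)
The paper does not prove this lemma at all; it simply quotes it as the classical Kronecker approximation theorem with a citation to Hardy--Wright, Theorem~442. Your sketch via Pontryagin duality (or alternatively Weyl's criterion) is a correct standard proof, and you already acknowledge that the paper merely cites the result, so there is nothing to compare.
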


For convenience, (\ref{KroApp}) will be represented as $l\lambda_k-q_k\approx\alpha_k$.

\begin{lemma}
\emph{(See \cite[Theorem~1~(a)]{Ri}) }
\label{Ri}
Let $p_1,p_2,\ldots,p_k$ be distinct positive  primes.
The set $$\left\{\sqrt[n]{p_1^{m(1)}\cdots p_k^{m(k)}}: 0\leq m(i)<n,~1\leq i \leq k \right\}$$ is linearly independent over  $\mathbb{Q}$.
\end{lemma}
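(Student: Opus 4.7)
The plan is to reduce linear independence to a field degree computation. Set $K = \mathbb{Q}(\sqrt[n]{p_1}, \ldots, \sqrt[n]{p_k})$. Since $(\sqrt[n]{p_i})^n = p_i \in \mathbb{Q}$, the $n^k$ monomials $\sqrt[n]{p_1^{m(1)} \cdots p_k^{m(k)}} = \prod_i (\sqrt[n]{p_i})^{m(i)}$ with $0 \le m(i) < n$ already $\mathbb{Q}$-span $K$ (they span the ring $\mathbb{Q}[\sqrt[n]{p_1}, \ldots, \sqrt[n]{p_k}]$, which equals $K$ since the $\sqrt[n]{p_i}$ are algebraic over $\mathbb{Q}$). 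If $[K : \mathbb{Q}] = n^k$, then a spanning set of exactly $n^k$ elements must be a basis, and the desired linear independence over $\mathbb{Q}$ follows at once.

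I would prove $[K:\mathbb{Q}] = n^k$ by induction on $k$. The base case $k = 1$ is Eisenstein's criterion at $p_1$ applied to $x^n - p_1$. For the inductive step, set $L = \mathbb{Q}(\sqrt[n]{p_1}, \ldots, \sqrt[n]{p_{k-1}})$; assuming $[L:\mathbb{Q}] = n^{k-1}$, the tower law reduces the problem to showing that $x^n - p_k$ remains irreducible over $L$.

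The irreducibility of $x^n - p_k$ over $L$ is the main obstacle. My approach is via $p_k$-adic valuations: extend the $p_k$-adic valuation $v$ of $\mathbb{Q}$ to an algebraic closure with $v(p_k) = 1$. Because $L$ is generated over $\mathbb{Q}$ by $n$-th roots of primes distinct from $p_k$, the prime $p_k$ is unramified in $L$, so $v$ takes integer values on $L^{\times}$. Any root $\alpha$ of $x^n - p_k$ satisfies $n\, v(\alpha) = 1$, so a nontrivial degree-$d$ factor of $x^n - p_k$ over $L$ would have constant term with $v$-value $d/n$, not an integer for $0 < d < n$, contradicting integrality on $L$. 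When $4 \mid n$ one additionally has to verify the supplementary hypothesis of Capelli's irreducibility criterion for binomials, namely that $-4p_k$ is not a fourth power in $L$; the same valuation obstruction dispatches this case. Once irreducibility is established, the tower law yields $[K:\mathbb{Q}] = n^k$ and the reduction from the first paragraph concludes the proof.
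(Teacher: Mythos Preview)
The paper does not give its own proof of this lemma; it simply cites Richards (1974). So there is no in-paper argument to compare against, and any self-contained proof you supply is going beyond what the paper does.

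Your overall plan (reduce to $[K:\mathbb{Q}]=n^k$ and prove this by showing $x^n-p_k$ stays irreducible over $L=\mathbb{Q}(\sqrt[n]{p_1},\ldots,\sqrt[n]{p_{k-1}})$) is standard and correct in outline. The gap is in the ramification claim. You assert that $p_k$ is unramified in $L$ ``because $L$ is generated over $\mathbb{Q}$ by $n$-th roots of primes distinct from $p_k$''. That inference is false: the primes that can ramify in $\mathbb{Q}(\sqrt[n]{p_i})$ are the divisors of $n\,p_i$, not just $p_i$. So whenever $p_k\mid n$ your unramifiedness claim may fail, and with it the statement that the extended $p_k$-adic valuation is $\mathbb{Z}$-valued on $L^\times$. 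A concrete counterexample already in the $n=2$ case the paper actually uses: take $p_1=3$, $p_2=2$. Then $2$ ramifies in $\mathbb{Q}(\sqrt{3})$ (discriminant $12$), and indeed $(1+\sqrt{3})^2=2(2+\sqrt{3})$ with $2+\sqrt{3}$ a unit, so $v_2(1+\sqrt{3})=\tfrac12$. Your ``constant term has valuation $d/n\notin\mathbb{Z}$'' contradiction therefore disappears, even though $\sqrt{2}\notin\mathbb{Q}(\sqrt{3})$ is of course still true.

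The valuation argument is fine whenever $p_k\nmid n$; the finitely many primes dividing $n$ need a separate treatment. One clean fix is to pass to $F=\mathbb{Q}(\zeta_n)$ first, where the extensions $F(\sqrt[n]{p_i})/F$ are Kummer and the degree computation reduces to showing the classes of the $p_i$ are independent in $F^\times/(F^\times)^n$, which can be checked with valuations at primes above the $p_i$; alternatively one can follow Richards' Galois-theoretic argument directly. Either way, the step ``$p_k$ unramified in $L$'' as written does not hold in general and must be repaired.
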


The following result comes from Lemma \ref{Ri} immediately.

\begin{cor}\label{independent}
\label{Ri1}
The set $$\left\{\sqrt{\Delta}: \Delta\text{~is~a~square-free~integer}\right\}$$ is linearly independent over $\mathbb{Q}$.
\end{cor}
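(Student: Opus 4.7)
The plan is to reduce the corollary to Lemma \ref{Ri} by specializing to $n=2$. Since linear independence is inherently a statement about all finite subfamilies, I would start by taking an arbitrary finite dependence relation $\sum_{i=1}^s c_i \sqrt{\Delta_i}=0$ with $c_i\in\mathbb{Q}$ and with $\Delta_1,\ldots,\Delta_s$ pairwise distinct square-free positive integers, and aim to conclude $c_1=\cdots=c_s=0$.

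Next, I would list all prime factors appearing in at least one $\Delta_i$, say $p_1,\ldots,p_k$. Because each $\Delta_i$ is square-free, it admits a unique representation
\[
\Delta_i=p_1^{m_i(1)}\cdots p_k^{m_i(k)},\qquad m_i(j)\in\{0,1\},
\]
and the pairwise distinctness of the $\Delta_i$ is equivalent to the pairwise distinctness of the exponent vectors $(m_i(1),\ldots,m_i(k))$. Consequently each $\sqrt{\Delta_i}$ lies in the set
\[
\left\{\sqrt{p_1^{m(1)}\cdots p_k^{m(k)}}:0\le m(j)<2,\ 1\le j\le k\right\},
\]
which is precisely the set appearing in Lemma \ref{Ri} with $n=2$.

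Finally, applying Lemma \ref{Ri} with $n=2$ to this set yields its $\mathbb{Q}$-linear independence, so any subfamily, including $\{\sqrt{\Delta_1},\ldots,\sqrt{\Delta_s}\}$, is also $\mathbb{Q}$-linearly independent; hence every $c_i$ must vanish. I do not anticipate a real obstacle: the statement is essentially an instance of Lemma \ref{Ri}, and the only minor bookkeeping is to pass between square-free integers and their $\{0,1\}$-exponent representations and to make the finite-subfamily reduction explicit.
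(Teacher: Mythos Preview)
Your proposal is correct and follows exactly the route intended by the paper, which simply remarks that the corollary comes from Lemma~\ref{Ri} immediately without spelling out details. You have merely made explicit the passage to a finite subfamily and the specialization $n=2$ that underlies that one-line justification.
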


The following results give the Laplacian eigenvalues and eigenprojectors of Q-graph. We use $R_{G}$ to denote the incidence matrix of $G$. Let $J_{n \times m}$ be an $n \times
m$ matrix with each entry equals $1$. For a vector $\mathbf{y}$, we write $\left \| \mathbf{y} \right \|$ as the module of $\mathbf{y}$.

\begin{lemma}\emph{(See \cite[Theorem~3.1]{LiY2}}\label{eigenvalues1}
Suppose that $G$ is an $r$-regular non-bipartite graph with $n$ vertices, $m$ edges and $r\ge 2$. Suppose that $\left \{ \mathbf{y}_{1} ,\mathbf{y}_{2},\ldots,\mathbf{y}_{m-n}
\right \} $ is a maximal independent solution vector set of the linear system $R_{G}\mathbf{y}=\mathbf{0}$. Then
\begin{itemize}
\item[\rm (a)]  $2r+2$ is a Laplacian eigenvalue of $\mathcal{Q} \left ( G \right ) $, and its corresponding eigenprojector is
\begin{align}\label{F41}
     F_{2r+2}=\sum_{l=1}^{m-n}\frac{1}{\left \| \mathbf{y}_l \right \|^2 }
 \left(
	\begin{array}{cc}
	\mathbf{0}&\mathbf{0} \\ [0.3cm]
    \mathbf{0}&  \mathbf{y}_l\mathbf{y}_l^\top
	\end{array}
	\right).
	\end{align}
\item[\rm (b)]  For each Laplacian eigenvalue $\theta $ of $G$,
 \begin{align}\label{F441}
\theta^{\pm }=\frac{1}{2} \left ( \theta +2+r\pm\sqrt{\left ( r+2-\theta \right )^2 +4\theta}  \right ),
 \end{align}
are Laplacian eigenvalues of $\mathcal{Q} \left ( G \right )$, and their  corresponding eigenvectors are
              \begin{align}\label{F42}
     F_{\theta^{\pm}} =&\frac{\left (\theta+2-\theta^{\pm}  \right )^2 }{\left (\theta+2-\theta^{\pm}  \right )^2+\left (2r-\theta \right )}
 \left(
	\begin{array}{cc}
	F_{\theta}(G)&\frac{1}{\theta+2-\theta^{\pm}} F_{\theta}(G)R_{G}\\ [0.3cm]
	\frac{1}{\theta+2-\theta^{\pm}} \left (  F_{\theta}(G)R_{G} \right )^{\top }  &    \frac{1}{\left (\theta+2-\theta^{\pm}\right )^{2}}R_{G}^{\top }F_{\theta}(G)R_{G}
	\end{array}
	\right),
	\end{align}
   where $F_{\theta}(G)$     denotes the eigenprojector corresponding to $\theta$ of $G$.
    \end{itemize}
Thus, the spectral decomposition of $L_{\mathcal{Q} \left ( G \right )}$   is written as
\begin{align}\label{I1}
   L_{\mathcal{Q} \left ( G \right )}=\sum_{\theta \in \mathrm{{Spec}}_{L}(G)}\sum_{\pm }\theta^{\pm }F_{\theta^\pm}+\left ( 2r+2 \right )F_{2r+2}.
\end{align}
\end{lemma}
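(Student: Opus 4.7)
The plan is structural: write $L_{\mathcal{Q}(G)}$ in block form using the incidence matrix $R_G$, construct eigenvectors from eigenvectors of $L_G$ together with vectors in $\ker R_G$, and then assemble the eigenprojectors from orthonormal bases. Counting degrees ($r$ for each original vertex and $2r$ for each subdivision vertex) and noting that adjacency among subdivision vertices is the line-graph adjacency $R_G^\top R_G-2I_m$, I obtain
$$L_{\mathcal{Q}(G)} \;=\; \begin{pmatrix} rI_n & -R_G \\ -R_G^\top & (2r+2)I_m-R_G^\top R_G \end{pmatrix},$$
together with the identity $R_G R_G^\top=2rI_n-L_G$ for $r$-regular $G$.

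For part (b), given a Laplacian eigenvector $\mathbf{x}$ of $G$ with eigenvalue $\theta$, I would look for an eigenvector of $L_{\mathcal{Q}(G)}$ of the form $\binom{\mathbf{x}}{\alpha R_G^\top\mathbf{x}}$. Since $G$ is non-bipartite, every Laplacian eigenvalue satisfies $\theta<2r$, so $\|R_G^\top\mathbf{x}\|^2=(2r-\theta)\|\mathbf{x}\|^2>0$ and the ansatz is genuinely two-dimensional. Using $R_G R_G^\top\mathbf{x}=(2r-\theta)\mathbf{x}$, the eigenvector equation collapses to two scalar conditions whose compatibility is the quadratic
$$\lambda^2-(\theta+r+2)\lambda+\theta(r+1)=0,$$
whose roots are the $\theta^\pm$ of (\ref{F441}), with $\alpha=1/(\theta+2-\theta^\pm)$. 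For part (a), any $\mathbf{y}\in\ker R_G$ gives $L_{\mathcal{Q}(G)}\binom{\mathbf{0}}{\mathbf{y}}=\binom{\mathbf{0}}{(2r+2)\mathbf{y}}$, since $R_G^\top R_G\mathbf{y}=\mathbf{0}$. With $G$ non-bipartite (and connected), $\mathrm{rank}(R_G)=n$ and hence $\dim\ker R_G=m-n$, yielding $m-n$ independent eigenvectors at $2r+2$. Together with the $2n$ eigenvectors from part (b), the full $n+m$ dimensional spectrum is exhausted.

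The eigenprojectors are then assembled by fixing orthonormal bases of each $\theta$-eigenspace of $L_G$, applying $\mathbf{x}\mapsto\binom{\mathbf{x}}{\alpha R_G^\top\mathbf{x}}$, normalizing (the factor $(\theta+2-\theta^\pm)^2/[(\theta+2-\theta^\pm)^2+(2r-\theta)]$ arises from $\|R_G^\top\mathbf{x}\|^2=(2r-\theta)\|\mathbf{x}\|^2$), summing the rank-one projectors, and reassembling via $F_\theta(G)=\sum_i\mathbf{x}_i\mathbf{x}_i^\top$ to recover the block form (\ref{F42}); the analogous construction on an orthonormal basis of $\ker R_G$ gives (\ref{F41}), and (\ref{I1}) follows by summing. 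The main obstacle will be proving mutual orthogonality of all the constructed eigenvectors, so that the sums of rank-one projectors really do reproduce the eigenprojectors. Orthogonality across distinct $\theta$'s of $G$ is inherited from that of the $F_\theta(G)$'s; orthogonality of any part-(b) vector with a part-(a) vector $\binom{\mathbf{0}}{\mathbf{y}}$ is immediate since $R_G\mathbf{y}=\mathbf{0}$ yields $\alpha(R_G^\top\mathbf{x})^\top\mathbf{y}=\alpha\mathbf{x}^\top R_G\mathbf{y}=0$; and orthogonality between the two branches $\theta^+$ and $\theta^-$ at a fixed $\theta$ reduces to verifying
$$(\theta+2-\theta^+)(\theta+2-\theta^-)=-(2r-\theta),$$
a one-line check via Vieta's formulas applied to the quadratic above.
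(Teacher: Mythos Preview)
The paper does not supply its own proof of this lemma; it is simply quoted from \cite{LiY2}. Your argument is correct and is the standard one: the block form of $L_{\mathcal{Q}(G)}$ via $R_G$, the ansatz $\binom{\mathbf{x}}{\alpha R_G^\top\mathbf{x}}$ leading to the quadratic $\lambda^2-(\theta+r+2)\lambda+\theta(r+1)=0$, the kernel vectors for $2r+2$, the dimension count $2n+(m-n)=n+m$, and the three orthogonality checks all go through exactly as you outline. One small remark: formula (\ref{F41}) is the orthogonal projector onto $\ker R_G$ only if the $\mathbf{y}_l$ are taken mutually orthogonal, not merely linearly independent as the hypothesis literally reads; your use of ``an orthonormal basis of $\ker R_G$'' is the correct interpretation.
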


\begin{lemma}\emph{(See \cite[Theorem~3.2]{LiY2}}\label{eigenvalues}
Suppose that $G$ is an $r$-regular bipartite graph with $n$ vertices, $m$ edges and $r\ge 2$. Suppose that $\left \{ \mathbf{y}_{1} ,\mathbf{y}_{2},\ldots,\mathbf{y}_{m-n+1}
\right \} $ is a maximal set of independent solution vectors of the linear system $R_{G}\mathbf{y}=\mathbf{0}$. Then
\begin{itemize}
    \item[\rm (a)]  $2r+2$ is a Laplacian eigenvalue of $\mathcal{Q} \left ( G \right ) $, and its corresponding eigenprojector is
\begin{align}\label{F43}
     F_{2r+2}=\sum_{l=1}^{m-n+1}\frac{1}{\left \| \mathbf{y}_l \right \|^2 }
 \left(
	\begin{array}{cc}
	\mathbf{0}&\mathbf{0}\\ [0.3cm]
   \mathbf{0}&  \mathbf{y}_l\mathbf{y}_l^\top
	\end{array}
	\right).
	\end{align}

        \item[\rm (b)]  For each Laplacian eigenvalue $\theta \neq2r$ of $G$,
         \begin{align}\label{F441--1}
\theta^{\pm }=\frac{1}{2} \left ( \theta +2+r\pm\sqrt{\left ( r+2-\theta \right )^2 +4\theta}  \right ),
 \end{align}
                are Laplacian eigenvalues of $\mathcal{Q} \left ( G \right )$, and their  corresponding eigenvectors are
              \begin{align}\label{F44}
     F_{\theta^{\pm}} =&\frac{\left (\theta+2-\theta^{\pm}  \right )^2 }{\left (\theta+2-\theta^{\pm}  \right )^2+\left (2r-\theta \right )}
 \left(
	\begin{array}{cc}
	F_{\theta}(G)&\frac{1}{\theta+2-\theta^{\pm}} F_{\theta}(G)R_{G}\\ [0.3cm]
	\frac{1}{\theta+2-\theta^{\pm}} \left (  F_{\theta}(G)R_{G} \right )^{\top }  &    \frac{1}{\left (\theta+2-\theta^{\pm}\right )^{2}}R_{G}^{\top }F_{\theta}(G)R_{G}
	\end{array}
	\right),
	\end{align}
   where $F_{\theta}(G)$     denotes the eigenprojector corresponding to $\theta$ of $G$.
       \item[\rm (c)] $r$ is a Laplacian eigenvalue  of $\mathcal{Q} \left ( G \right ) $, and its corresponding eigenprojector is
              \begin{align}\label{F45}
     F_{r} =\frac{1}{n}
\begin{pmatrix}
 J_{\left | V_1 \right |} & -J_{\left | V_1 \right |\times\left | V_2 \right |} & \mathbf{0}\\
 - J_{\left | V_2 \right |\times\left | V_1 \right |} & J_{\left | V_2 \right |} & \mathbf{0}\\
 \mathbf{0} & \mathbf{0} &\mathbf{0}
\end{pmatrix},
	\end{align}
where $V_{1}\cup V_{2}$ is the bipartition of $V(G).$
 \end{itemize}
Thus, the spectral decomposition of $L_{\mathcal{Q} \left ( G \right )}$   is written as
\begin{align}\label{I2}
   L_{\mathcal{Q} \left ( G \right )}=\sum_{\theta \in \mathrm{{Spec}}_{L}(G)\setminus \left \{ 2r \right \} }\sum_{\pm }\theta ^{\pm }F_{\theta^ \pm}+\left ( 2r+2 \right
   )F_{2r+2}+rF_{r}.
\end{align}
\end{lemma}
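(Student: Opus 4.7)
The plan is to realize $L_{\mathcal{Q}(G)}$ in block form with respect to the bipartition $V(\mathcal{Q}(G))=V(G)\sqcup I(G)$, and then lift each Laplacian eigenvector of $G$ to eigenvectors of $\mathcal{Q}(G)$ through the incidence matrix $R_G$, treating the eigenvalue $\theta=2r$ and the kernel of $R_G$ as separate sources of eigenvectors. The key identities I would use throughout are $R_{G}R_{G}^{\top}=A_G+rI_n=2rI_n-L_G$ (valid since $G$ is $r$-regular) and $A_{L(G)}=R_{G}^{\top}R_{G}-2I_m$. A vertex of $G$ has the same degree $r$ in $\mathcal{Q}(G)$, while each subdivision vertex has degree $2r$, so
\begin{equation*}
L_{\mathcal{Q}(G)}=\begin{pmatrix} rI_n & -R_G \\ -R_G^{\top} & (2r+2)I_m-R_G^{\top}R_G \end{pmatrix}.
\end{equation*}

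Next, for each Laplacian eigenvector $\mathbf{x}$ of $G$ with $L_G\mathbf{x}=\theta\mathbf{x}$ and $\theta\ne 2r$, I would try the ansatz $\mathbf{v}=(\mathbf{x}^{\top},\alpha R_G^{\top}\mathbf{x}^{\top})^{\top}$ and impose $L_{\mathcal{Q}(G)}\mathbf{v}=\mu\mathbf{v}$. Using $R_GR_G^{\top}\mathbf{x}=(2r-\theta)\mathbf{x}$ and hence $R_G^{\top}R_GR_G^{\top}\mathbf{x}=(2r-\theta)R_G^{\top}\mathbf{x}$, the system reduces to the single scalar condition $(r-\mu)(\theta+2-\mu)=2r-\theta$, equivalently $\mu^2-(\theta+r+2)\mu+\theta(r+1)=0$, whose roots are exactly the $\theta^{\pm}$ of the statement. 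Solving for $\alpha$ gives the clean identity $\alpha=1/(\theta+2-\theta^{\pm})$, well-defined precisely because $\theta\ne 2r$.

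The special eigenvalues come from two separate pieces. Since $G$ is $r$-regular bipartite, $\theta=2r$ is a simple Laplacian eigenvalue with sign-eigenvector $\mathbf{x}_0$ (entries $+1$ on $V_1$, $-1$ on $V_2$), and for this vector $R_G^{\top}\mathbf{x}_0=\mathbf{0}$ since every edge has one endpoint in $V_1$ and one in $V_2$. Thus $(\mathbf{x}_0^{\top},\mathbf{0})^{\top}$ is an eigenvector with eigenvalue $r$, giving part (c), and its outer product $\tfrac{1}{n}\mathbf{x}_0\mathbf{x}_0^{\top}$ fills the top-left block of $F_r$ as in the stated formula. For any $\mathbf{y}\in\ker R_G$, the lift $(\mathbf{0},\mathbf{y}^{\top})^{\top}$ is annihilated by the off-diagonal blocks and satisfies $((2r+2)I_m-R_G^{\top}R_G)\mathbf{y}=(2r+2)\mathbf{y}$, producing the eigenvalue $2r+2$ with multiplicity $\dim\ker R_G=m-n+1$ (the bipartite nullity), which is part (a).

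Finally, to extract the projectors one normalizes and sums: given an orthonormal basis $\{\mathbf{x}_i\}$ of the $\theta$-eigenspace of $L_G$, the lifts have squared norm $1+\alpha^2(2r-\theta)=[(\theta+2-\theta^{\pm})^2+(2r-\theta)]/(\theta+2-\theta^{\pm})^2$, and assembling $\sum_i\mathbf{v}_i\mathbf{v}_i^{\top}/\|\mathbf{v}_i\|^2$ with $\sum_i\mathbf{x}_i\mathbf{x}_i^{\top}=F_{\theta}(G)$ reproduces the block matrix displayed for $F_{\theta^{\pm}}$; the analogous normalization of $\{\mathbf{y}_l\}$ and of $\mathbf{x}_0$ yields the formulas for $F_{2r+2}$ and $F_r$. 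A dimension count $2(n-1)+1+(m-n+1)=m+n$ confirms completeness. The main technical obstacle is the bookkeeping required to keep the three families (the $\theta^{\pm}$-lifts, the sign-vector $r$-eigenvector, and the $\ker R_G$ lifts) mutually orthogonal and to confirm that no eigenvector of $\mathcal{Q}(G)$ is missed; orthogonality of distinct eigenspaces is automatic for the symmetric $L_{\mathcal{Q}(G)}$, but one should verify explicitly that within a fixed $\theta$ the two lifts corresponding to $\theta^+$ and $\theta^-$ are indeed linearly independent (which they are, as $\alpha_+\ne\alpha_-$) and that the sign-eigenvector and kernel lifts are genuinely disjoint from the generic lifts, so the given projectors form a resolution of the identity.
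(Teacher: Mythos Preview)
The paper does not prove this lemma; it is quoted verbatim from \cite[Theorem~3.2]{LiY2} and used as a black box. Your sketch is correct and is essentially the standard argument one expects for such a result: write $L_{\mathcal{Q}(G)}$ in the block form
\[
L_{\mathcal{Q}(G)}=\begin{pmatrix} rI_n & -R_G \\ -R_G^{\top} & (2r+2)I_m-R_G^{\top}R_G \end{pmatrix},
\]
lift each Laplacian eigenvector $\mathbf{x}$ of $G$ with eigenvalue $\theta\ne 2r$ to $(\mathbf{x},\alpha R_G^{\top}\mathbf{x})$ and solve the resulting $2\times 2$ scalar system, handle $\theta=2r$ via the bipartite sign vector (for which $R_G^{\top}\mathbf{x}_0=\mathbf{0}$), and fill out the remaining dimensions with $\ker R_G$. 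Your normalization computation $\|\mathbf{v}\|^2=1+\alpha^2(2r-\theta)$ and the dimension count $2(n-1)+1+(m-n+1)=m+n$ are both correct, and together with the symmetry of $L_{\mathcal{Q}(G)}$ they guarantee the projectors sum to the identity. One small remark: you should also note (or at least be prepared to verify) that for $\theta\ne 2r$ the values $\theta^{\pm}$ never coincide with $r$ or with $2r+2$, so the three families of eigenvectors really do belong to distinct eigenspaces; this follows by plugging $\mu=r$ or $\mu=2r+2$ into the quadratic $\mu^2-(\theta+r+2)\mu+\theta(r+1)=0$ and observing that each forces $\theta=2r$.
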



\section{Pair-LPST in Q-graph} \label{Sec:Q}

 Let $\mathbf{e}_{i}^{n}$ denote an $n$-dimensional unit vector with the $i$-th entry
being 1. According to Definition \ref{Q}, the vertex set of Q-graph is denoted by
$$
V (\mathcal{Q} \left ( G \right ) )=V(G)\cup I(G).
$$
Then there are three types of $\mathbf{e}_{a}^{m+n}-\mathbf{e}_{b}^{m+n}$ in Q-graph:
   $$
   a,b \in I(G);~~ a \in V(G),  b \in I(G); ~~a,b \in V(G).
   $$
In this section, we will give some sufficient conditions for these three types of pair states of Q-graph not having Pair-LPST.

 \begin{theorem}\label{C_n}
 Let $G$ be an r-regular graph with $n$ vertices, $m$ edges and $r\ge 2$.
  If
 $$
 a,b\in V(G) \text{~or~} a,b \in I(G),
 $$
  then for each $ \theta \in \mathrm{{Spec}}_{L}(G)\setminus \left \{ 2r \right \}$,
  $$
  \theta^{+}\in \mathrm{{supp}}_{L_{\mathcal{Q} \left ( G \right )}}(\mathbf{e}_{a}^{m+n}-\mathbf{e}_{b}^{m+n})
  \text{~if~and~only~if~}
  \theta^{-}\in \mathrm{{supp}}_{L_{\mathcal{Q} \left ( G \right )}}(\mathbf{e}_{a}^{m+n}-\mathbf{e}_{b}^{m+n}).
  $$
Moreover, if $\mathbf{e}_{a}^{m+n}-\mathbf{e}_{b}^{m+n}$ and $ \mathbf{e}_{c}^{m+n}-\mathbf{e}_{d}^{m+n}$ are two pair states of $\mathcal{Q} \left ( G \right )$ that are
Laplacian strongly cospectral,  then
$$
\theta^{+} \in \Lambda^{+}_{ab,cd}\text{~if~and~only~if~}\theta^{-} \in \Lambda^{+}_{ab,cd},
$$
and
$$
\theta^{+} \in \Lambda^{-}_{ab,cd} \text{~if~and~only~if~} \theta^{-} \in \Lambda^{-}_{ab,cd}.
$$
\end{theorem}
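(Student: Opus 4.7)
My plan is to exploit the explicit block form of $F_{\theta^{\pm}}$ from Lemma \ref{eigenvalues1} and Lemma \ref{eigenvalues}. Writing any $(m+n)$-vector as $\begin{pmatrix}\mathbf u\\\mathbf v\end{pmatrix}\in\mathbb{R}^{n}\oplus\mathbb{R}^{m}$, setting $A:=F_{\theta}(G)$ and $\alpha^{\pm}:=1/(\theta+2-\theta^{\pm})$, a direct matrix computation gives
\[
F_{\theta^{\pm}}\begin{pmatrix}\mathbf u\\\mathbf v\end{pmatrix}=c^{\pm}\begin{pmatrix}A(\mathbf u+\alpha^{\pm}R_{G}\mathbf v)\\ \alpha^{\pm}R_{G}^{\top}A(\mathbf u+\alpha^{\pm}R_{G}\mathbf v)\end{pmatrix},
\]
where $c^{\pm}\ne 0$ is the prefactor appearing in \eqref{F42}, and $\alpha^{\pm}\ne 0$ because $\theta\ne 2r$ (automatic for non-bipartite $G$, imposed by hypothesis in the bipartite case). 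The bottom block is just $\alpha^{\pm}R_{G}^{\top}$ applied to the top block, so $F_{\theta^{\pm}}$ acting on such a vector is completely determined by its top block.

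For the support statement I treat the two cases in parallel. If $a,b\in V(G)$, then $\mathbf e_{a}^{m+n}-\mathbf e_{b}^{m+n}=\begin{pmatrix}\mathbf e_{a}-\mathbf e_{b}\\ \mathbf 0\end{pmatrix}$, and the top block of $F_{\theta^{\pm}}(\mathbf e_{a}^{m+n}-\mathbf e_{b}^{m+n})$ equals $c^{\pm}A(\mathbf e_{a}-\mathbf e_{b})$, which (since $c^{\pm}\ne 0$) vanishes iff $A(\mathbf e_{a}-\mathbf e_{b})=\mathbf 0$, a condition independent of the sign. The case $a,b\in I(G)$ is the mirror image and reduces to the sign-free condition $AR_{G}(\mathbf e_{a}-\mathbf e_{b})=\mathbf 0$. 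Hence $\theta^{+}$ and $\theta^{-}$ are simultaneously in, or simultaneously out of, $\mathrm{supp}_{L_{\mathcal{Q}(G)}}(\mathbf e_{a}^{m+n}-\mathbf e_{b}^{m+n})$.

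For the ``moreover'' part, the equation $F_{\theta^{\pm}}(\mathbf e_{a}^{m+n}-\mathbf e_{b}^{m+n})=\epsilon^{\pm}F_{\theta^{\pm}}(\mathbf e_{c}^{m+n}-\mathbf e_{d}^{m+n})$ with $\epsilon^{\pm}\in\{+1,-1\}$ again reduces to equality of the top blocks, because the bottom blocks are recovered from them by the \emph{same} map $\alpha^{\pm}R_{G}^{\top}$. Writing $\mathbf e_{c}^{m+n}-\mathbf e_{d}^{m+n}=\begin{pmatrix}\mathbf u_{cd}\\ \mathbf v_{cd}\end{pmatrix}$, in the case $a,b\in V(G)$ the system becomes $A(\mathbf e_{a}-\mathbf e_{b})=\epsilon^{\pm}A(\mathbf u_{cd}+\alpha^{\pm}R_{G}\mathbf v_{cd})$ for both signs; subtracting the two equations yields $(\epsilon^{+}-\epsilon^{-})A\mathbf u_{cd}=(\epsilon^{-}\alpha^{-}-\epsilon^{+}\alpha^{+})AR_{G}\mathbf v_{cd}$. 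When $\epsilon^{+}=\epsilon^{-}$, the inequality $\alpha^{+}\ne\alpha^{-}$ forces $AR_{G}\mathbf v_{cd}=\mathbf 0$, and both equations collapse to the single consistent identity $A(\mathbf e_{a}-\mathbf e_{b})=\epsilon^{\pm}A\mathbf u_{cd}$, which is exactly the ``$\Lambda^{\pm}$ coherence'' claimed.

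The main obstacle is ruling out the opposite-sign scenario $\epsilon^{+}=-\epsilon^{-}$. To eliminate it I would combine the subtraction identity above with the vanishing $F_{2r+2}(\mathbf e_{a}^{m+n}-\mathbf e_{b}^{m+n})=\mathbf 0$ which holds when $a,b\in V(G)$, because by \eqref{F41}/\eqref{F43} the projector $F_{2r+2}$ has nonzero entries only in the $I(G)\times I(G)$ corner. Strongly cospectrality then forces $F_{2r+2}(\mathbf e_{c}^{m+n}-\mathbf e_{d}^{m+n})=\mathbf 0$, which constrains $\mathbf v_{cd}$ to lie in $(\ker R_{G})^{\perp}$. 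Plugging this back and invoking the scalar identities $c^{+}+c^{-}=1$ and $c^{+}\alpha^{+}+c^{-}\alpha^{-}=0$ (both direct consequences of $\beta^{+}\beta^{-}=\theta-2r$ and $\beta^{+}+\beta^{-}=\theta+2-r$ for $\beta^{\pm}:=\theta+2-\theta^{\pm}$) in the reconstruction $\mathbf e_{c}^{m+n}-\mathbf e_{d}^{m+n}=\sum_{\sigma}F_{\sigma}(\mathbf e_{c}^{m+n}-\mathbf e_{d}^{m+n})$ (adding the contribution from the eigenvalue $r$ in the bipartite case) should produce a contradiction to the mixed-sign hypothesis, leaving only $\epsilon^{+}=\epsilon^{-}$. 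The case $a,b\in I(G)$ then follows by the dual computation with the roles of $\mathbf u$ and $\mathbf v$ exchanged.
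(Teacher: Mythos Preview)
Your treatment of the support claim is correct and matches the paper's argument exactly: both reduce $F_{\theta^{\pm}}(\mathbf e_a-\mathbf e_b)=\mathbf 0$ to the sign-free condition $F_\theta(G)(\mathbf e_a-\mathbf e_b)=\mathbf 0$ (resp.\ $F_\theta(G)R_G(\mathbf e_a-\mathbf e_b)=\mathbf 0$).

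The gap is in the ``moreover'' part, precisely where you write ``should produce a contradiction.'' You never actually rule out the opposite-sign scenario $\epsilon^{+}=-\epsilon^{-}$, and your proposed route via $F_{2r+2}$ cannot do it: that projector has zero off-diagonal blocks, so $F_{2r+2}(\mathbf e_c-\mathbf e_d)=0$ only yields $\mathbf v_{cd}\in(\ker R_G)^{\perp}$, which is far from enough to force a contradiction from your single-$\theta$ subtraction identity. The paper instead uses $F_{0^{+}}=F_{r+2}$, whose blocks are constant multiples of $J$ (see \eqref{Eigen72}); since $F_{r+2}(\mathbf e_a-\mathbf e_b)=\mathbf 0$ when $a,b\in V(G)$, strong cospectrality gives $F_{r+2}(\mathbf e_c-\mathbf e_d)=\mathbf 0$, and the rank-one block structure forces $c,d$ to lie together in $V(G)$ or together in $I(G)$. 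The case $c,d\in I(G)$ is then eliminated directly: comparing top blocks in \eqref{Q1} and \eqref{F528} shows the same-sign option gives $\theta^{+}=\theta^{-}$, while the opposite-sign option forces $\theta=r-2$ to be the \emph{only} element of $\mathrm{supp}_{L_G}(\mathbf e_a-\mathbf e_b)$, whence $L_G(\mathbf e_a-\mathbf e_b)=(r-2)(\mathbf e_a-\mathbf e_b)$, which is impossible by inspecting the $a$-th coordinate. Once $c,d\in V(G)$, the top block of $F_{\theta^{\pm}}(\mathbf e_c-\mathbf e_d)$ is $c^{\pm}F_\theta(G)(\mathbf e_c-\mathbf e_d)$ with no $\alpha^{\pm}$ in sight, so $\epsilon^{+}=\epsilon^{-}$ is automatic. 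Your reconstruction-plus-scalar-identities sketch does not substitute for this structural step.
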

\begin{proof}\noindent\emph{Case 1.} $a , b \in V(G)$. By (\ref{F42}) and (\ref{F44}), we have
\begin{align}\label{Q1}
 	F_{\theta^{\pm}}\left ( \mathbf{e}_{a}^{m+n}-\mathbf{e}_{b}^{m+n}\right )=\frac{\left (\theta+2-\theta^{\pm}  \right )^2 }{\left (\theta+2-\theta^{\pm}  \right )^2+\left
(2r-\theta \right )}\begin{pmatrix}
F_{\theta}(G)(\mathbf{e}_{a}^{n}-\mathbf{e}_{b}^{n}) \\[0.2cm] \frac{1}{\theta+2-\theta^{\pm}} \left (  F_{\theta}(G)R_{G} \right )^{\top
}(\mathbf{e}_{a}^{n}-\mathbf{e}_{b}^{n})
\end{pmatrix}.
	\end{align}
Obviously,
\begin{equation}\label{PMandG--1}
F_{\theta^{\pm}}\left ( \mathbf{e}_{a}^{m+n}-\mathbf{e}_{b}^{m+n}\right )\neq\mathbf{0} \text{~if~and~only~if~}
F_\theta(G) \left ( \mathbf{e}_{a}^{n}-\mathbf{e}_{b}^{n}\right )\neq\mathbf{0},
\end{equation}
which implies that $\theta^{+}\in \mathrm{{supp}}_{L_{\mathcal{Q} \left ( G \right )}}(\mathbf{e}_{a}^{m+n}-\mathbf{e}_{b}^{m+n})$ if and only if
  $\theta^{-}\in \mathrm{{supp}}_{L_{\mathcal{Q} \left ( G \right )}}(\mathbf{e}_{a}^{m+n}-\mathbf{e}_{b}^{m+n})$.

Moreover, if $\mathbf{e}_{a}^{m+n}-\mathbf{e}_{b}^{m+n}$ and $ \mathbf{e}_{c}^{m+n}-\mathbf{e}_{d}^{m+n}$ are Laplacian strongly cospectral in  $\mathcal{Q} \left ( G \right
)$, then for each $\theta^{\pm}\in \mathrm{{Spec}}_{L}(\mathcal{Q} \left ( G \right ))$, we have
\begin{equation}\label{Eigen73}
F_{\theta^{\pm}}\left ( \mathbf{e}_{a}^{m+n}-\mathbf{e}_{b}^{m+n}\right )=\pm F_{\theta^{\pm}}\left ( \mathbf{e}_{c}^{m+n}-\mathbf{e}_{d}^{m+n}\right ).
\end{equation}
By (\ref{F42}) and (\ref{F44}),  we have
\begin{equation}\label{Eigen72}
	F_{0^+}=F_{r+2}=\frac{r}{n(r+2)}
  \left(	
  \begin{array}{cc}
	J_{n\times n}& \frac{-2}{r}J_{n\times m} \\ [0.2cm]
	 \frac{-2}{r}J_{m\times n} & \frac{4}{r^2}J_{m\times m}
	\end{array}
	\right).
	\end{equation}
Thus, by (\ref{Eigen73}) and (\ref{Eigen72}),
 $$
  F_{r+2} (\mathbf{e}_{a}^{n+m}-\mathbf{e}_{b}^{n+m})=\pm F_{r+2} (\mathbf{e}_{c}^{n+m}-\mathbf{e}_{d}^{n+m})=\mathbf{0},
  $$
which implies that $c,d\in V(G)$ or $c,d\in I(G)$.

Suppose that $c,d\in I(G)$. By (\ref{F42}) and (\ref{F44}), we have
\begin{align}\label{F528}
     F_{\theta^{\pm}}\left ( \mathbf{e}_{c}^{m+n}-\mathbf{e}_{d}^{m+n}\right ) =&\frac{\left (\theta+2-\theta^{\pm}  \right )^2 }{\left (\theta+2-\theta^{\pm}  \right
     )^2+\left (2r-\theta \right )}
 \left(
	\begin{array}{c}
 \frac{1}{\theta+2-\theta^{\pm}} F_{\theta}(G)R_{G}\left ( \mathbf{e}_{c}^{m}-\mathbf{e}_{d}^{m}\right )\\ [0.3cm]
  \frac{1}{\left (\theta+2-\theta^{\pm}\right )^{2}}R_{G}^{\top }F_{\theta}(G)R_{G}\left ( \mathbf{e}_{c}^{m}-\mathbf{e}_{d}^{m}\right )
	\end{array}
	\right).
	\end{align}
By (\ref{Q1}), (\ref{Eigen73}) and (\ref{F528}), we have
\begin{align*}
&(\theta+2-\theta^{+})F_{\theta}(G)(\mathbf{e}_{a}^{n}-\mathbf{e}_{b}^{n})=\pm F_{\theta}(G)R_{G}(\mathbf{e}_{c}^{m}-\mathbf{e}_{d}^{m}),\\[0.2cm]
&(\theta+2-\theta^{-})F_{\theta}(G)(\mathbf{e}_{a}^{n}-\mathbf{e}_{b}^{n})=\pm F_{\theta}(G)R_{G}(\mathbf{e}_{c}^{m}-\mathbf{e}_{d}^{m}).
\end{align*}

If both $\theta^{\pm}\in \Lambda_{ab,cd}^{+}$ or $\theta^{\pm}\in \Lambda_{ab,cd}^{-}$, then $\theta^{+}=\theta^{-}$,
a contradiction to (\ref{F441}) and (\ref{F441--1}).

If $\theta^{+}\in \Lambda_{ab,cd}^{+}$ and $\theta^{-}\in \Lambda_{ab,cd}^{-}$, or  $\theta^{+}\in \Lambda_{ab,cd}^{-}$ and $\theta^{-}\in \Lambda_{ab,cd}^{+}$, then
$$
\theta^{+}+\theta^{-}=2(\theta+2).
$$
By (\ref{F441}) and (\ref{F441--1}),  we have $\theta=r-2$. So, $$\mathrm{{supp}}_{L_{\mathcal{Q} \left ( G \right )}}(\mathbf{e}_{a}^{m+n}-\mathbf{e}_{b}^{m+n})=\left
\{(r-2)^{\pm}  \right \},$$
and by (\ref{PMandG--1}),
\begin{equation}\label{GSupp--21}
\mathrm{{supp}}_{L\left ( G \right )}(\mathbf{e}_{a}^{n}-\mathbf{e}_{b}^{n})=\left \{ r-2  \right \}.
\end{equation}
Recall that the sum of all eigenprojectors of $G$ is equal to the identity matrix. By the definition of Laplacian eigenvalue support and (\ref{GSupp--21}), we have
\begin{align}\label{Q4}
F_{r-2}(G)(\mathbf{e}_{a}^{n}-\mathbf{e}_{b}^{n})=\mathbf{e}_{a}^{n}-\mathbf{e}_{b}^{n}.
\end{align}
By (\ref{spect1}), (\ref{GSupp--21}) and (\ref{Q4}), we have
$$
L_G(\mathbf{e}_{a}^{n}-\mathbf{e}_{b}^{n})=\sum_{r=0}^{p}\theta_r
F_{\theta_r}(G)(\mathbf{e}_{a}^{n}-\mathbf{e}_{b}^{n})=(r-2)F_{r-2}(G)(\mathbf{e}_{a}^{n}-\mathbf{e}_{b}^{n})=(r-2)(\mathbf{e}_{a}^{n}-\mathbf{e}_{b}^{n}),
$$
which can not occur. Thus, we must have $c,d\in V(G)$.

For $c,d\in V(G)$, similar to (\ref{Q1}), we have
\begin{align}\label{Qcd--2121}
 	F_{\theta^{\pm}}\left ( \mathbf{e}_{c}^{m+n}-\mathbf{e}_{d}^{m+n}\right )=\frac{\left (\theta+2-\theta^{\pm}  \right )^2 }{\left (\theta+2-\theta^{\pm}  \right )^2+\left
(2r-\theta \right )}\begin{pmatrix}
F_{\theta}(G)(\mathbf{e}_{c}^{n}-\mathbf{e}_{d}^{n}) \\[0.2cm] \frac{1}{\theta+2-\theta^{\pm}} \left (  F_{\theta}(G)R_{G} \right )^{\top
}(\mathbf{e}_{c}^{n}-\mathbf{e}_{d}^{n})
\end{pmatrix}.
	\end{align}
If $\theta^{+} \in \Lambda^{+}_{ab,cd}$, that is,
$$
F_{\theta^{+}}\left ( \mathbf{e}_{a}^{m+n}-\mathbf{e}_{b}^{m+n}\right )=F_{\theta^{+}}\left ( \mathbf{e}_{c}^{m+n}-\mathbf{e}_{d}^{m+n}\right ),
$$
by (\ref{Q1}) and (\ref{Qcd--2121}), we then have
\begin{equation}\label{FinalEqu12121}
F_{\theta}\left ( \mathbf{e}_{a}^{n}-\mathbf{e}_{b}^{n}\right )=F_{\theta}\left ( \mathbf{e}_{c}^{n}-\mathbf{e}_{d}^{n}\right ).
\end{equation}
Together with (\ref{Q1}) and (\ref{Qcd--2121}), (\ref{FinalEqu12121}) leads to
$$
F_{\theta^{-}}\left ( \mathbf{e}_{a}^{m+n}-\mathbf{e}_{b}^{m+n}\right )=F_{\theta^{-}}\left ( \mathbf{e}_{a}^{m+n}-\mathbf{e}_{b}^{m+n}\right ).
$$
Thus, $\theta^{+} \in \Lambda^{+}_{ab,cd}$ if and only if $\theta^{-} \in \Lambda^{+}_{ab,cd}$. Similarly, we can get that $\theta^{+} \in \Lambda^{-}_{ab,cd}$ if and only if
$\theta^{-} \in \Lambda^{-}_{ab,cd}$.

\noindent\emph{Case 2.} $a, b \in I(G)$. The proof is similar to that of Case 1, and hence we omit the details here.
\qed
\end{proof}

\begin{theorem}\label{hpest}
Let $G$ be an $r$-regular connected graph with $n$ vertices and $m$ edges. If $r+1$ is prime, then $\mathcal{Q} \left ( G \right )$ does not have Pair-LPST.
\end{theorem}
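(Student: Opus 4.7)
The plan is to assume, for contradiction, that $\mathcal{Q}(G)$ admits Pair-LPST between two distinct pair states $P_1=\mathbf{e}_a^{m+n}-\mathbf{e}_b^{m+n}$ and $P_2=\mathbf{e}_c^{m+n}-\mathbf{e}_d^{m+n}$ at some time $\tau$, and let $S=\mathrm{supp}_{L_{\mathcal{Q}(G)}}(P_1)$. I will split the argument along the three types of pair states recorded at the start of Section \ref{Sec:Q}: (i) $a,b\in V(G)$; (ii) $a,b\in I(G)$; (iii) mixed type. Write $D(\theta)=(\theta-r)^2+4(r+1)$ so that the eigenvalues from Lemmas \ref{eigenvalues1}(b) and \ref{eigenvalues}(b) take the shape $\theta^{\pm}=\frac{1}{2}(\theta+r+2\pm\sqrt{D(\theta)})$, and let $\Delta$ denote the common square-free integer guaranteed by Lemma \ref{Coutinho}(b). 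The strategy is to combine Theorem \ref{C_n} with the primality of $r+1$ to restrict $S$ to a handful of small configurations, and then close each of them by a vertex-wise identity for $L_G$ acting on the pair state.

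For cases (i) and (ii), Theorem \ref{C_n} forces $\theta^{+}\in S\Leftrightarrow\theta^{-}\in S$ with both in the same class of $\Lambda^{\pm}_{ab,cd}$, so the parity condition of Lemma \ref{Coutinho}(c) requires $D(\theta)=4g^{2}k^{2}\Delta$. Direct evaluation of $F_0,F_{r+2},F_{2r+2},F_r$ on $P_1$ using (\ref{F41})--(\ref{F45}) gives $0,r+2\notin S$ always, $2r+2\notin S$ in case (i), and $r\notin S$ in case (ii); Lemma \ref{Coutinho}(b) also forbids two distinct quadratic pairs $\theta_1^{\pm},\theta_2^{\pm}$ from coexisting in $S$ unless $\Delta=1$, in which case the elimination above forces $|S|\le 1$, contradicting Lemma \ref{support}. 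Using the observation in the paragraph before Lemma \ref{support} that both $\Lambda^{+}_{ab,cd}$ and $\Lambda^{-}_{ab,cd}$ are nonempty, the remaining possibilities are either $S=\{\theta^{+},\theta^{-}\}$ for a single $\theta$---whereupon Theorem \ref{C_n} empties one of the $\Lambda$-classes, contradicting distinctness of $P_1,P_2$---or, only in case (i) with $G$ bipartite and $a,b$ in different bipartition classes, $S=\{\theta^{+},\theta^{-},r\}$. The rational-part matching of Lemma \ref{Coutinho}(b) then pins $\theta=r-2$, so $\mathbf{e}_a^{n}-\mathbf{e}_b^{n}$ has $L_G$-spectral support contained in $\{r-2,2r\}$.

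For case (iii), a direct calculation from (\ref{F42})/(\ref{F44}) at $\theta=0$ yields $F_{r+2}(P_1)=\frac{1}{n}(\mathbf{1}_n,-\frac{2}{r}\mathbf{1}_m)^{\top}\neq\mathbf{0}$, so $r+2\in S$ and strong cospectrality forces $P_2$ to be mixed as well. Lemma \ref{Coutinho}(b) restricts any non-integer element of $S$ to have rational part $r+2$, so the only candidates are $(r+2)\pm\sqrt{r+2}$ arising from $\theta=r+2\in\mathrm{Spec}_L(G)$, and integer elements are confined to $\{r,r+2,2r+2\}$. In each residual configuration of cases (i) and (iii), the vanishing $F_\theta(G)\mathbf{e}_a^{n}=\mathbf{0}$ (and for case (iii) also $F_\theta(G)R_G\mathbf{e}_b^{m}=\mathbf{0}$) for $\theta\in\mathrm{Spec}_L(G)$ outside a small exceptional set places $\mathbf{e}_a^{n}$ (and $\mathbf{e}_u^{n}+\mathbf{e}_v^{n}$ for the edge $\{u,v\}$ corresponding to $b\in I(G)$) into an invariant subspace of dimension at most three. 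Comparing the spectral decomposition $L_G(\mathbf{e}_a^{n}-\mathbf{e}_b^{n})=(r-2)F_{r-2}(G)(\mathbf{e}_a^{n}-\mathbf{e}_b^{n})+(2r)F_{2r}(G)(\mathbf{e}_a^{n}-\mathbf{e}_b^{n})$ with $r(\mathbf{e}_a^{n}-\mathbf{e}_b^{n})-\sum_{c\sim a}\mathbf{e}_c^{n}+\sum_{c\sim b}\mathbf{e}_c^{n}$ at each vertex produces identities of the form $[c\sim b]=2(r+2)/n$ for every $c\in V_1\setminus\{a\}$ and its symmetric form on $V_2\setminus\{b\}$, which jointly force $|V_1|+|V_2|=2(r+2)$ and $|V_1|,|V_2|\le r+1$---impossible when $r\ge 2$. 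The analogous vertex-wise comparison in case (iii) yields a relation of the form $3=2(r+2)/n$ incompatible with $r$-regularity, completing the contradiction.

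The main obstacle will be this final structural $L_G$-identity step: Lemma \ref{Coutinho} and Theorem \ref{C_n}, together with the arithmetic engine driven by $r+1$ being prime, do not by themselves eliminate the residual configurations $\{\theta^{+},\theta^{-},r\}$ in case (i) and $\{r+2,(r+2)\pm\sqrt{r+2}\}$ in case (iii). Eliminating them demands a careful coefficient-by-coefficient unpacking of the spectral decomposition of $P_1$ against $L_G$, with particular care at the edge case $a\sim b$ in $G$ for case (i) and at the coordinated constraints on $\mathbf{e}_a^{n}$ and $R_G\mathbf{e}_b^{m}$ in case (iii).
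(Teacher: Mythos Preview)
Your approach is essentially the paper's: contradiction, split by the three pair types, exploit Theorem~\ref{C_n} to pair $\theta^{+}$ with $\theta^{-}$ in types (i)/(ii), and use the primality of $r+1$ to pin down $S$. The arithmetic engine you describe (rational-part matching forcing $\theta_1=\theta_2$ when $\Delta>1$, hence at most one quadratic pair) is equivalent to the paper's Case~1.1.2 computation $\eta_{+}\eta_{-}\Delta=-4(r+1)$, $\rho=2r+2$.

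One genuine gap, in your case (iii). You assert that any non-integer $\mu\in S$ must be $(r+2)\pm\sqrt{r+2}$, arising from $\theta=r+2$. This identification uses that the rational part of $\theta^{\pm}$ is $(\theta+r+2)/2$, which is only valid when $\theta$ is rational. In the mixed type Theorem~\ref{C_n} does \emph{not} apply, so you cannot argue (as you can in (i)/(ii)) that $\theta^{+}\in S\Rightarrow\theta^{-}\in S$ and then deduce $\theta=\theta^{+}+\theta^{-}-r-2\in\mathbb{Q}$. The paper sidesteps this by a different device: from $\theta_i^{+}=\kappa\in S$ integer it writes $\theta_i^{-}=r+2+\kappa/(r+1-\kappa)$ (your missing analogue of (\ref{532})), which is rational, hence an integer eigenvalue; thus \emph{both} $\theta_i^{\pm}$ are integers, the $\delta$-analysis forces $\theta_i\in\{0,2r\}$, and $S\subseteq\{r+2,2r+2\}$ (or $\{r,r+2,2r+2\}$ in the bipartite case), eliminated by reading a single coordinate of $(F_{r+2}+F_{2r+2})P_1=P_1$. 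Your residual configuration $\{r+2,(r+2)^{+},(r+2)^{-}\}$ can indeed be killed by your vertex-wise identity (one finds $[c\sim a]=(r+2)/n$ for all $c\neq a$, forcing $n=r+2$ and $\deg a=n-1=r+1$, contradiction), but you should first close the loophole of irrational $\theta$, e.g.\ via $\theta=\theta^{+}(r+2-\theta^{+})/(r+1-\theta^{+})\in\mathbb{Q}(\sqrt{\Delta})$ and an integrality argument.

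One point where you are \emph{more} careful than the paper: in the bipartite subcase of type (i) with $a\in V_1,\,b\in V_2$, the paper simply asserts ``$r\in S$, and then all the eigenvalues in $S$ should be integers''; as you note, Lemma~\ref{Coutinho}(b) only forces common rational part, and $S=\{(r-2)^{+},(r-2)^{-},r\}$ with $\Delta$ the square-free part of $r+2$ is not excluded by that lemma alone. Your vertex-wise comparison of $L_G(\mathbf e_a^{n}-\mathbf e_b^{n})$ against $(r-2)F_{r-2}(G)(\mathbf e_a^{n}-\mathbf e_b^{n})+2rF_{2r}(G)(\mathbf e_a^{n}-\mathbf e_b^{n})$ does eliminate this (you get $[c\sim b]=2(r+2)/n$ for $c\in V_1\setminus\{a\}$, hence $n=2(r+2)$, while $\deg b=r$ forces $|V_1|\le r+1$, contradiction).
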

\begin{proof}
We prove the result by contradiction. Let $\mathbf e_{a}^{m+n}-\mathbf e_{b}^{m+n} $ and $\mathbf e_{c}^{m+n}-\mathbf e_{d}^{m+n} $ be two distinct pair states of $\mathcal{Q}
\left ( G \right )$, and suppose that $\mathcal{Q} \left ( G \right )$ has Pair-LPST between $\mathbf e_{a}^{m+n}-\mathbf e_{b}^{m+n} $ and $\mathbf e_{c}^{m+n}-\mathbf
e_{d}^{m+n}$. Let
$$
S=\mathrm{{supp}}_{L_{\mathcal{Q}(G)}}\left( \mathbf e_{a}^{m+n}-\mathbf e_{b}^{m+n} \right ).
$$
Consider the following cases.

\noindent\emph{Case 1.}
$G$ is a non-bipartite graph.

\noindent\emph{Case 1.1.} $a, b \in I(G)$. By Theorem \ref{C_n}, we have
 $$
 S\subseteq \left \{ \theta_{i}^{\pm } \right \} \cup \left \{ 2r+2 \right \},~0\le i\le p,
 $$
and $\theta_{i}^{+}\in S$ if and only if $\theta_{i}^{-}\in S$, where $\theta_0> \theta_1>\cdots>\theta_p=0 $ are all distinct Laplacian eigenvalues of $G$. Recall Lemma \ref{Coutinho} (b) that the eigenvalues in $S$ are either all integers
or all quadratic integers. Consider the following cases.

\noindent\emph{Case 1.1.1.}
All the eigenvalues in $S$ are integers. In this case, if there exist $\theta_{i}^{\pm}\in S$, by (\ref{F441}), both
$$
\theta_{i}^{+ }+\theta_{i}^{- }=2+r+\theta_{i},
$$
and
$$
\theta_{i}^{+ }-\theta_{i}^{- }=\sqrt{\left ( r+2-\theta_{i}  \right )^2+4\theta_{i}  },
$$
are integers, which implies that $\theta_{i}$ is an integer and $\left ( r+2-\theta_{i}  \right )^2+4\theta_{i}$ is a perfect square. Since $4\theta_{i}$ is even, the parity
of $\left (r+2-\theta_{i}  \right )^2+4\theta_{i}$ and $\left ( r+2-\theta _{i} \right )^2$ must be same. Thus, there exists an integer $\delta\left ( \ge 1 \right )$ satisfying
$$
\left ( r+2-\theta_{i}  \right )^2+4\theta_{i}=\left ( r-\theta_{i}+2\delta  \right )^2
$$
which implies that
$$
\theta_{i} =\frac{\left ( \delta-1 \right ) \left ( \delta+1+r \right ) }{\delta} =\delta+r-\frac{1+r}{\delta}.
$$
Since $r+1$ is prime, we have that $\delta=r+1$ or $\delta=1$.

If $\delta=r+1$, then $\theta_{i}=2r$, which contradicts the fact that $G$ is an $r$-regular non-bipartite graph.

If $\delta=1$, then $\theta_{i}=0$. By (\ref{F441}) and (\ref{F42}), we have
\begin{align}\label{537}
F_{0^-}=F_{0}=\frac{1}{n+m}J_{n+ m}.
\end{align}
Together with (\ref{Eigen72}) and (\ref{537}), we have
$$
F_{0}(\mathbf e_{a}^{m+n}-\mathbf e_{b}^{m+n})=\mathbf{0},
$$
and
$$
F_{r+2}(\mathbf e_{a}^{m+n}-\mathbf e_{b}^{m+n})=\mathbf{0},
$$
a contradiction to $\theta_{i}^{\pm}\in S$. Therefore, $S=\left \{ 2r+2 \right \}$. By Lemma
\ref{support}, $\mathbf{e}_{a}^{m+n}-\mathbf{e}_{b}^{m+n}$ does not have Pair-LPST in this case.

\noindent\emph{Case 1.1.2.}
All the eigenvalues in $S$ are quadratic integers. For $\theta_{i}^{\pm} \in S$, assume that
\begin{align}\label{Eigen5}
\theta_{i}^{\pm}&=\frac{1}{2}(\rho+\eta _{\pm}\sqrt{\Delta}).
\end{align}
Then
\begin{align}\label{Eigen11-11}
 4\theta_{i}^{+} \theta_{i}^{-} =\rho^{2}+\eta_{+}\eta_{-}\Delta+\rho\left (\eta_++\eta_-  \right )\sqrt{\Delta}
 \end{align}
By $\left (\ref{F441} \right )$ and $\left (\ref{Eigen5} \right )$, we have
\begin{align}\label{Eigen8}
 \theta_{i}=\rho+\frac{\eta_++\eta_-}{2}\sqrt{\Delta}-2-r.
\end{align}
Then $\left (\ref{F441} \right )$ and (\ref{Eigen8}) imply that
\begin{align}\label{Eigen11}
 4\theta_{i}^{+} \theta_{i}^{-}
 &=4\left (\rho-r-2  \right )\left (r+1  \right )+2\left (\eta_++\eta_-  \right )\left (r+1  \right )\sqrt{\Delta}.
\end{align}
Since $\sqrt{\Delta}$ is irrational, by (\ref{Eigen11-11}) and (\ref{Eigen11}), we have
$$
\rho^{2}+\eta_{+}\eta_{-}\Delta=4\left (\rho-r-2  \right )\left (r+1  \right ),
$$
and
$$
\rho\left (\eta_++\eta_-  \right )\sqrt{\Delta}=2\left (\eta_++\eta_-  \right )\left (r+1  \right )\sqrt{\Delta}.
$$
Thus
\begin{align}\label{522}
\eta_{+}\eta_{-}\Delta&=-4\left (r+1\right ), \text{~and~}\rho=2r+2.
\end{align}

Notice Lemma \ref{Coutinho} (b) and $\left (\ref{Eigen5} \right )$ that
$$
\theta_{i}^{+}-\theta_{i}^{-}=\frac{1}{2}\left ( \eta_{+}-\eta_{-} \right ) \sqrt{\Delta}
$$
is an integer multiple of $\sqrt{\Delta}$. Thus, $\eta_{+}$ and $\eta_{-}$ have the same parity.

If both $\eta_+$ and $\eta_-$ are even, then (\ref{522}) leads to $\Delta \mid r+1$. Recall that $r+1$ is prime. Then we have
$$
\Delta=r+1,\ \eta_{+}=-\eta_{-}=2.
$$
By $\left (\ref{Eigen8}\right )$ and (\ref{522}), we have $\theta_{i}=r$, and then $S=\left \{r^{\pm}  \right \} $. By Theorem \ref{C_n}, we have
 $$
 r^{\pm}\in \Lambda^{+}_{ab,cd}\text{~or~} r^{\pm}\in \Lambda^{-}_{ab,cd},
 $$
 which means that one of $\Lambda^{\pm}_{ab,cd}$ is empty. This can not occur. Thus, $\mathcal{Q} \left ( G \right )$ does not have Pair-LPST between $\mathbf{e}_{a}^{m+n}-\mathbf{e}_{b}^{m+n}$
 and $\mathbf{e}_{c}^{m+n}-\mathbf{e}_{d}^{m+n}$ in this case.

If both $\eta_+$ and $\eta_-$ are odd, by (\ref{522}), we have
$$
\Delta= -\frac{4(r+1)}{\eta_+\eta_-},
$$
which implies that $ 4\mid \Delta$, a contradiction to the fact that $\Delta$ is a square-free number. Thus, $\mathcal{Q} \left ( G \right )$ does not have Pair-LPST between
$\mathbf{e}_{a}^{m+n}-\mathbf{e}_{b}^{m+n}$ and $\mathbf{e}_{c}^{m+n}-\mathbf{e}_{d}^{m+n}$ in this case.

\noindent\emph{Case 1.2.} $a \in V(G), b \in I(G)$.
By (\ref{Eigen72}), we have
\begin{align}\label{538}
F_{0^{+}}\left( \mathbf e_{a}^{m+n}-\mathbf e_{b}^{m+n} \right )=F_{r+2}\left( \mathbf e_{a}^{m+n}-\mathbf e_{b}^{m+n} \right )\neq\mathbf{0},
\end{align}
which means $r+2\in S$, and then all the eigenvalues in $S$ should be integers. By Lemma \ref{support}, if $\mathbf e_{a}^{m+n}-\mathbf e_{b}^{m+n}$ has Pair-LPST, then there exist at least two eigenvalues in $S$.
Assume that $\kappa \in S$ is an integer such that $\kappa \not= r+2$.
Without loss of generality, say $\theta_{i}^{+}=\kappa $. By (\ref{F441}), we have
$$
\theta_i =\frac{\kappa }{1+r-\kappa }+\kappa ,
$$
and then
\begin{align}\label{532}
\theta_{i}^{-}&=\frac{\kappa }{1+r-\kappa }+r+2.
\end{align}
Note (\ref{532}) that $\theta_{i}^{-}$ is a rational Laplacian eigenvalue of $\mathcal{Q} \left ( G \right )$. Thus,  $\theta_{i}^{-}$ is an integer. Similarly, if $\theta_{i}^{-}\in S$, we can also get that $\theta_{i}^{+}$ is an integer.  Thus, if $\theta_{i}^{+}\in S$ or $\theta_{i}^{-}\in S$ is an integer, then $\theta_{i}^{\pm}$ are both integers. Similar to the discussion of Case 1.1.1, we have $\theta_{i}=0$. Thus,
$$
S\subseteq \left \{ 0^\pm,2r+2 \right \}.
$$
By (\ref{537}), we have
$$
F_{0^-}(\mathbf e_{a}^{m+n}-\mathbf e_{b}^{m+n})=F_{0}(\mathbf e_{a}^{m+n}-\mathbf e_{b}^{m+n})=\mathbf{0}.
$$
Thus, $0^-\notin S$. By (\ref{F41}) and (\ref{538}), we have
$$
S\subseteq \left \{ r+2,2r+2 \right \}.
$$
Recall that the sum of all eigenprojectors of $\mathcal{Q}(G)$ is equal to the identity matrix. By the definition of Laplacian eigenvalue support, we have
\begin{align}\label{604}
(F_{r+2}+F_{2r+2})(\mathbf e_{a}^{m+n}-\mathbf e_{b}^{m+n})=\mathbf e_{a}^{m+n}-\mathbf e_{b}^{m+n}.
\end{align}
Left multiplying $(\mathbf e_{a}^{n+m })^{\top}$ on both sides of (\ref{604}), by (\ref{F41}) and (\ref{Eigen72}), we obtain that the equality of (\ref{604}) holds if and only if $n=1$, a contradiction to $r\ge2$. Thus, $\mathcal{Q} \left ( G \right )$ does not have Pair-LPST between $\mathbf{e}_{a}^{m+n}-\mathbf{e}_{b}^{m+n}$ and
  $\mathbf{e}_{c}^{m+n}-\mathbf{e}_{d}^{m+n}$ in this case.

\noindent\emph{Case 1.3.} $a,b \in V(G)$.
The proof is similar to that of Case 1.1,  and hence we omit the details here.

\noindent\emph{Case 2.} $G$ is a bipartite graph.

\noindent\emph{Case 2.1.} $a , b \in I(G)$.
Since $G$ is an $r$-regular bipartite graph, we have $\theta_{0}=2r \in \mathrm{{Spec}}_{L}(G)$.
By Lemma \ref{eigenvalues} and Theorem \ref{C_n}, we have
\begin{align*}
S\subseteq \left \{ \theta_{i}^{\pm } \right \}\cup \left \{ r,2r+2 \right \},~ 1\le i\le p,
\end{align*}
and for each $\theta_{i}\neq2r$, $\theta_{i}^{+}\in S$ if and only if $\theta_{i}^{-}\in S$, where $ \theta_1>\cdots>\theta_p=0 $ are all distinct Laplacian eigenvalues of $G$ except for $2r$. Similar to Case 1.1, we consider the following cases.

\noindent\emph{Case 2.1.1.}
All the eigenvalues in $S$ are integers. Recall the discussion of Case 1.1.1 that if $\theta_{i}^{\pm}$ are both integers, then $\theta_{i}=0$ or $\theta_{i}=2r$. By (\ref{Eigen72}) and (\ref{537}), we have
$$
F_{0^-}(\mathbf e_{a}^{m+n}-\mathbf e_{b}^{m+n})=F_{0^+}(\mathbf e_{a}^{m+n}-\mathbf e_{b}^{m+n})=\mathbf{0},
$$
which implies that $0^\pm\notin S$. Note that $(2r)^+=2r+2$ and $(2r)^-=r$. By (\ref{F45}), we have
$$
F_{r}(\mathbf e_{a}^{m+n}-\mathbf e_{b}^{m+n})=\mathbf{0}.
$$
Thus
$$
S\subseteq\left \{ 2r+2 \right \}.
$$
By Lemma \ref{support}, we can conclude that $\mathbf e_{a}^{m+n}-\mathbf e_{b}^{m+n}$ does not have Pair-LPST in this case.

\noindent\emph{Case 2.1.2.}
All the eigenvalues in $S$ are quadratic integers. The proof is similar to Case 1.1.2,  and hence we omit the details here.

\noindent\emph{Case 2.2.} $a \in V(G), b \in I(G)$.
By (\ref{Eigen72}), we have
$$
F_{0^{+}}\left( \mathbf e_{a}^{m+n}-\mathbf e_{b}^{m+n} \right )=F_{r+2}\left( \mathbf e_{a}^{m+n}-\mathbf e_{b}^{m+n} \right )\neq\mathbf{0},
$$
which means $r+2\in S$, and then all the eigenvalues in $S$ should be integers.  According to the discussion of Case 1.2, if $\theta_{i}^{+}\in S$ or $\theta_{i}^{-}\in S$ is an integer, then $\theta_{i}^{\pm}$ are both integers. Recall that if $\theta_{i}^{\pm}$ are both integers, then $\theta_{i}=0$ or $\theta_{i}=2r$. By Lemma \ref{eigenvalues}, we have
 $$
S\subseteq \left \{0^{\pm},r,2r+2 \right \} .
$$
Note (\ref{537}) that
$$
F_{0^-}(\mathbf e_{a}^{m+n}-\mathbf e_{b}^{m+n})=\mathbf{0},
$$
which implies
 $$
S\subseteq \left \{0^{+}, r,2r+2\right \}.
$$
Since the sum of all eigenprojectors of $\mathcal{Q}(G)$ is equal to the identity matrix, according to the definition of eigenvalues support, we have
\begin{align}\label{531}
(F_{0^{+}}+F_{r}+F_{2r+2})(\mathbf e_{a}^{m+n}-\mathbf e_{b}^{m+n})=\mathbf e_{a}^{m+n}-\mathbf e_{b}^{m+n}.
\end{align}
Left multiplying $\mathbf (\mathbf e_{a}^{n+m})^ {\top }$ on both sides of (\ref{531}), by (\ref{F43}), (\ref{F45}) and (\ref{Eigen72}), we get that the equality of (\ref{531}) holds if and only if $n=2$, a contradiction to $r\ge2$.
Thus, $\mathcal{Q} \left ( G \right )$ does not have Pair-LPST between $\mathbf{e}_{a}^{m+n}-\mathbf{e}_{b}^{m+n}$ and $\mathbf{e}_{c}^{m+n}-\mathbf{e}_{d}^{m+n}$.

\noindent\emph{Case 2.3. } $a , b \in V(G)$.

\noindent\emph{Case 2.3.1.} $a \in V_{1}$, $b \in V_{2}$.
By (\ref{F45}),
 $$
F_{(2r)^-}\left( \mathbf e_{a}^{m+n}-\mathbf e_{b}^{m+n} \right )=F_{r}\left( \mathbf e_{a}^{m+n}-\mathbf e_{b}^{m+n} \right )\ne\mathbf{0},
 $$
which means $r\in S$, and then all the eigenvalues in $S$ should be integers. According to Theorem \ref{C_n}, we have that $\theta_{i}^{+}\in S$ if and only if $\theta_{i}^{-}\in S$. Recall that if $\theta_{i}^{\pm}$ both are integers, then $\theta_{i}=0$ or $\theta_{i}=2r$.
Then
 $$
S\subseteq \left \{0^{\pm},(2r)^{\pm} \right \}= \left \{ 0,r+2, r,2r+2 \right \}.
$$
By (\ref{Eigen72}) and (\ref{537}), it is easy to verify that $0^{\pm}\notin S$. By (\ref{F43}) and (\ref{F45}), we have
$$
F_{2r+2}( \mathbf e_{a}^{m+n}-\mathbf e_{b}^{m+n})=\mathbf{0},
$$
and
 $$
 F_{r}( \mathbf e_{a}^{m+n}-\mathbf e_{b}^{m+n})\neq\mathbf{0}.
 $$
 Thus, $S=\left \{r\right\}$. By Lemma \ref{support}, $\mathcal{Q} \left ( G \right )$ does not have Pair-LPST between $\mathbf{e}_{a}^{m+n}-\mathbf{e}_{b}^{m+n}$ and
 $\mathbf{e}_{c}^{m+n}-\mathbf{e}_{d}^{m+n}$ in this case.

\noindent\emph{Case 2.3.2.} $a,b \in V_{i}, i=1,2$. By Lemma \ref{eigenvalues} and Theorem \ref{C_n}, we know that
 $$
 S\subseteq \left \{ \theta_{i}^{\pm } \right \} ,~ 1\le i\le p,
 $$
 where $ \theta_1>\cdots>\theta_p=0 $ are all distinct Laplacian eigenvalues of $G$ except for $2r$. 
If all the eigenvalues in $S$ are integers, by refining the proof of Case 2.3.1,  one can easily verify that $S=\emptyset$, which can not occur.
If all the eigenvalues in $S$ are quadratic integers, similar to Case 1.1.2, we get the contradiction.
 Thus, $\mathcal{Q} \left ( G \right )$ does not have Pair-LPST between $\mathbf{e}_{a}^{m+n}-\mathbf{e}_{b}^{m+n}$ and
 $\mathbf{e}_{c}^{m+n}-\mathbf{e}_{d}^{m+n}$ in this case. 
 
This completes the proof. \qed
\end{proof}

Let $C_{n}$ denote a cycle with $n$ vertices. Since $C_{n}$ is a $2$-regular graph and $2+1=3$ is prime, Theorem \ref{hpest} implies the following result immediately.
\begin{cor} \label{noperfect}
$\mathcal{Q} \left ( C_{n} \right )$ does not  have Pair-LPST.
\end{cor}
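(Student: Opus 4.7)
The plan is simply to verify the hypotheses of Theorem \ref{hpest} for $G = C_n$ and then invoke that theorem. First I would note that $C_n$ is a connected graph in which every vertex has degree $2$, so it is $r$-regular with $r = 2$, satisfying the requirement $r \geq 2$. Next I would observe that $r + 1 = 3$, which is prime. Both hypotheses of Theorem \ref{hpest} are therefore met.

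Applying Theorem \ref{hpest} then yields immediately that $\mathcal{Q}(C_n)$ does not have Pair-LPST between any two distinct pair states, which is the desired conclusion.

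Since the corollary is a direct specialisation of Theorem \ref{hpest}, there is no genuine obstacle here. All of the substantive work, including the case analysis for non-bipartite cycles $C_n$ with $n$ odd and for bipartite cycles $C_n$ with $n$ even, has already been carried out in the proof of Theorem \ref{hpest} at the level of general $r$-regular graphs with $r+1$ prime.
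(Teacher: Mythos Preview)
Your proposal is correct and matches the paper's own argument exactly: the paper also just notes that $C_n$ is $2$-regular with $r+1=3$ prime and invokes Theorem~\ref{hpest} directly.
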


\begin{theorem}\label{2p}
 Let $G$ be an $r$-regular connected graph with $n$ vertices and $m$ edges. If $r+1=2^{t}$ is a power of 2 with $t\ge2$, then $\mathcal{Q} \left ( G \right )$ does not have
 Pair-LPST.
\end{theorem}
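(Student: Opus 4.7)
The plan is to argue by contradiction, closely mirroring the case-by-case structure used in the proof of Theorem~\ref{hpest}. I would assume that $\mathcal{Q}(G)$ admits Pair-LPST between two distinct pair states $\mathbf{e}_a^{m+n}-\mathbf{e}_b^{m+n}$ and $\mathbf{e}_c^{m+n}-\mathbf{e}_d^{m+n}$, set $S=\mathrm{{supp}}_{L_{\mathcal{Q}(G)}}(\mathbf{e}_a^{m+n}-\mathbf{e}_b^{m+n})$, and split into the same six subcases used there: Case~1 versus Case~2 according as $G$ is non-bipartite or bipartite, and within each, three possibilities for where $a,b$ lie, each with a further integer/quadratic-integer dichotomy. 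The primality of $r+1$ in the proof of Theorem~\ref{hpest} enters \emph{only} inside those two final dichotomies; everywhere else the arguments rely solely on the eigenvalue/eigenprojector formulas of Lemmas~\ref{eigenvalues1}--\ref{eigenvalues}, on Lemma~\ref{Coutinho}, and on Theorem~\ref{C_n}, all of which transfer verbatim. So only the two dichotomies need new work.

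For the integer dichotomy, the same derivation as in Case~1.1.1 of Theorem~\ref{hpest} gives $\theta_i=\delta+r-(r+1)/\delta$ for some positive integer divisor $\delta$ of $r+1$. Since $r+1=2^t$, this forces $\delta=2^j$ with $0\le j\le t$. The endpoints $j=0$ (so $\theta_i=0$) and $j=t$ (so $\theta_i=2r$) are disposed of exactly as in Theorem~\ref{hpest}, using the explicit forms of $F_0$, $F_{r+2}$, $F_r$ and $F_{2r+2}$ together with whether $G$ is bipartite. The new work is the range $1\le j\le t-1$, where a direct computation gives
\begin{equation*}
\theta_j^+=2^t+2^j,\qquad \theta_j^-=2^t-2^{t-j},\qquad \sqrt{D_j}=\theta_j^+-\theta_j^-=2^j+2^{t-j}.
\end{equation*}
I would then combine Theorem~\ref{C_n}, which forces $\theta_j^+$ and $\theta_j^-$ to lie in the same $\Lambda^{\pm}_{ab,cd}$, with condition~(c) of Lemma~\ref{Coutinho} applied to $g=\gcd\{\theta_0-\theta:\theta\in S\}$: the requirement that $\sqrt{D_j}/g$ be even must fail whenever $2r+2$ or $r+2$ also sits in $S$, because the $2$-adic valuations of $2^t-2^j$, $2^t+2^{t-j}$, $2^{t+1}$ and $2^t-1$ clash with that of $2^j+2^{t-j}$, producing the required contradiction for every $t\ge 2$.

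For the quadratic-integer dichotomy, the parity analysis of Case~1.1.2 still forces $\rho=2r+2$ and $\eta_+\eta_-\Delta=-4(r+1)=-2^{t+2}$, and the odd branch for $\eta_\pm$ still yields the impossibility $4\mid\Delta$. In the even branch $\Delta$ is a square-free positive divisor of $2^t$, so $\Delta\in\{1,2\}$; the only new possibility is $\Delta=2$. Writing $\eta_\pm=2u_\pm$ with $u_+u_-=-2^{t-1}$, I obtain $\theta_i^\pm=r+1+u_\pm\sqrt{2}$ and $\theta_i=r+(u_++u_-)\sqrt{2}$. When $u_++u_-=0$ (possible only for odd $t$), I recover $\theta_i=r$ and $S\supseteq\{r^+,r^-\}$, and the prime-case argument carries over verbatim: Theorem~\ref{C_n} forces both $r^\pm$ into the same $\Lambda^\pm$, leaving the other empty, contradicting the distinctness of the two pair states. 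When $u_++u_-\ne 0$, $\theta_i$ is a genuine quadratic irrationality, and Galois conjugation $\sqrt{2}\mapsto-\sqrt{2}$ forces the conjugate $\theta_i'$ to also lie in $\mathrm{Spec}_L(G)$, contributing $\theta_i'^\pm=r+1-u_\mp\sqrt{2}$ to $S$; the resulting differences reduce to the same $2$-adic parity conflict as in the integer dichotomy.

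The hardest part will be making the $2$-adic parity argument watertight across all admissible subsets of $\{2r+2,\,r+2,\,2^t\pm 2^j\}$ that can appear as $S$, since the prime-case dichotomy $\theta_i\in\{0,2r\}$ is no longer available and one must show that every combination of admissible $\theta_j^\pm$ and the ``special'' eigenvalues is ruled out. Once that is in place, each remaining configuration is closed off as in Theorem~\ref{hpest}: either $|S|\le 1$ and Lemma~\ref{support} applies, or the identity $\sum_{\theta\in S}F_\theta(\mathbf{e}_a^{n+m}-\mathbf{e}_b^{n+m})=\mathbf{e}_a^{n+m}-\mathbf{e}_b^{n+m}$, left-multiplied by $(\mathbf{e}_a^{n+m})^\top$ and evaluated using~(\ref{F41}), (\ref{F43}), (\ref{F45}) and~(\ref{Eigen72}), collapses to an impossible equation in $n$.
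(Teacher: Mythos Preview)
Your overall scaffold is correct and matches the paper: the same six subcases, the parametrisation $\theta_j^+=2^t+2^j$, $\theta_j^-=2^t-2^{t-j}$ in the integer branch, and $\Delta=2$ with a parallel $2$-adic computation in the quadratic branch. The core new idea---comparing $2$-adic valuations of the differences to violate Lemma~\ref{Coutinho}(c)---is exactly what the paper does.

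There is, however, a genuine gap. Your claim that ``the requirement that $\sqrt{D_j}/g$ be even must fail whenever $2r+2$ also sits in $S$'' is false in one configuration. Take $a,b\in I(G)$, $t$ even, $j=t/2$, and $S=\{2r+2,\,\theta_j^+,\,\theta_j^-\}$ with $2r+2\in\Lambda^+$ and $\theta_j^\pm\in\Lambda^-$. Then $2r+2-\theta_j^+=2^t-2^{t/2}$ and $2r+2-\theta_j^-=2^t+2^{t/2}$ both have $2$-adic valuation $t/2$, while $\theta_j^+-\theta_j^-=2^{t/2+1}$ has valuation $t/2+1$; with $g=2^{t/2}$ all parities in Lemma~\ref{Coutinho}(c) are satisfied, so no $2$-adic clash occurs. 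The paper closes this case (its Case~1.2.1) by an entirely different device you do not mention: from $S=\{r^\pm,2r+2\}$ it extracts, via the identities for $F_{r^+}+F_{r^-}$, the equation $R_G^\top F_r(G)R_G(\mathbf e_a^m-\mathbf e_b^m)=\tfrac{r}{2}(\mathbf e_a^m-\mathbf e_b^m-\mathbf e_c^m+\mathbf e_d^m)$, then observes that $R_G^\top F_{\theta_i}(G)R_G(\mathbf e_a^m-\mathbf e_b^m)=\mathbf 0$ for every other $\theta_i$, so the left side collapses to $(A_{\mathcal L(G)}+2I)(\mathbf e_a^m-\mathbf e_b^m)$ via $R_G^\top R_G=A_{\mathcal L(G)}+2I$; since $r=2^t-1$ is odd, $r/2$ is not an integer and the equation is impossible. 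This line-graph integrality step is essential and is not covered by your ``impossible equation in $n$'' fallback.

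A smaller point: in the quadratic branch your split into $u_++u_-=0$ versus $u_++u_-\ne 0$ and the Galois-conjugation manoeuvre are unnecessary. The paper simply picks $\theta_q^\pm\in\Lambda^+$ and $\theta_s^\pm\in\Lambda^-$ (both nonempty), writes each as $2^t\pm 2^{k}\sqrt 2$, and runs the same $2$-adic comparison as in the integer branch; no appeal to conjugate eigenvalues is needed.
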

\begin{proof}
We prove the result by contradiction. Let $\mathbf e_{a}^{m+n}-\mathbf e_{b}^{m+n} $ and $\mathbf e_{c}^{m+n}-\mathbf e_{d}^{m+n} $ be two distinct pair states of $\mathcal{Q}
\left ( G \right )$, and suppose that $\mathcal{Q} \left ( G \right )$ has Pair-LPST between $\mathbf e_{a}^{m+n}-\mathbf e_{b}^{m+n} $ and $\mathbf e_{c}^{m+n}-\mathbf
e_{d}^{m+n}$. Let
$$
S=\mathrm{{supp}}_{L_{\mathcal{Q}(G)}}\left( \mathbf e_{a}^{m+n}-\mathbf e_{b}^{m+n} \right ).
$$
Consider the following cases.

\noindent\emph{Case 1.} $G$ is a non-bipartite graph.

\noindent\emph{Case 1.1.} $a,b\in V(G)$.

\noindent\emph{Case 1.1.1.} All the eigenvalues in $S$ are integers. According to the discussion of Case 1.1 in Theorem \ref{hpest}, for each $\theta \in
\mathrm{{Spec}}_{L}(G)$, if $\theta^{\pm}$ are both integers, then
\begin{align}\label{2p1}
\theta =\frac{\left ( \delta-1 \right ) \left (\delta+1+r \right ) }{\delta} =\delta+2^{t}-1-\frac{2^{t}}{\delta}, ~(\delta \ge1).
 \end{align}
Note (\ref{F41}) that $2r+2\notin S$, and recall that neither of $\Lambda^{\pm}_{ab,cd}$ can be empty. By Theorem \ref{C_n}, we assume $\theta_{q}^{\pm}\in \Lambda^{+}_{ab,cd}$ and
 $\theta_{s}^{\pm}\in \Lambda^{-}_{ab,cd}$. Notice that both $\theta_{q}^{\pm}$ and $\theta_{s}^{\pm}$ are integers. By (\ref{F441}), we have both $ \theta_{q}$ and  $\theta_{s}$ are integers. By (\ref{2p1}), assume that
 $$
 \theta_{q}=2^{\alpha }+2^{t}-1-2^{t-\alpha },~
 \theta_{s}=2^{\beta }+2^{t}-1-2^{t-\beta },~(\alpha\neq \beta ).
 $$
By (\ref{F441}), we have
\begin{align*} 
 \theta_{q}^{+}&=2^{t}+2^{\alpha},~ \theta_{q}^{-}=2^{t}-2^{t-\alpha},\\[0.1cm]
 \theta_{s}^{+}&=2^{t}+2^{\beta},~ \theta_{s}^{-}=2^{t}-2^{t-\beta}.
 \end{align*}
According to Lemma \ref{Coutinho} (c), we know that $g\mid \left(\theta_{q}^{\pm}-\theta_{s}^{\pm}\right)$ and $\frac{\theta_{q}^{\pm}-\theta_{s}^{\pm}}{g}$ are odd. Without loss of generality, we assume that $\alpha>\beta$. Suppose that $g=2^xy$ with $y$ is an odd integer. Since
\begin{align*}
\frac{\theta_{q}^{+}-\theta_{s}^{+}}{g}=\frac{2^\alpha-2^\beta}{2^xy} \text{~and~}\frac{\theta_{q}^{-}-\theta_{s}^{-}}{g}=\frac{2^{t-\beta}-2^{t-\alpha}}{2^xy},
 \end{align*}
are odd, we have
\begin{align}\label{606}
x=\min\left\{\alpha,\beta\right\}=\beta=\min\left\{t-\beta,t-\alpha\right\}=t-\alpha.
 \end{align}
According to Lemma \ref{Coutinho} (c), by (\ref{606}), we have
$$
\frac{\theta_{q}^{+}-\theta_{s}^{-}}{g}=\frac{2^\alpha+2^{t-\beta}}{2^xy}=\frac{2^{\alpha+1}}{2^xy},
$$
is odd, which implies that $x=\alpha+1$. By (\ref{606}), we have $\alpha+1=\beta$, a contradiction to $\alpha>\beta$.

 \noindent\emph{Case 1.1.2.} All the eigenvalues in $S$ are quadratic integers. For $\theta_i^{\pm} \in S$, let
\begin{align*}
\theta_i^{\pm}=\frac{1}{2}(\rho +\eta _{\pm}\sqrt{\Delta}).
\end{align*}
By (\ref{522}), we have
$$
\eta _{+}\eta _{-}\Delta=-4\left (r+1\right )=-2^{t+2}, \text{~and~}\rho=2r+2=2^{t+1}.
$$
Since $\Delta$ is a square-free integer, we have $\Delta=2$. Thus
$$
\eta _{+}\eta _{-}=-2^{t+1}.
$$
Then we can assume that $\eta _{+}=2^{i_{+}}$,  $\eta _{-}=-2^{i_{-}}$ with $i_{+}+i_{-}=t+1$. Recall that neither of $\Lambda^{\pm}_{ab,cd}$ can be empty. By Theorem \ref{C_n}, assume that $\theta_{q}^{\pm}\in \Lambda^{+}_{ab,cd}$ and $\theta_{s}^{\pm}\in \Lambda^{-}_{ab,cd}$ with
 \begin{align*}
 \theta_{q}^{+}=2^{t}+2^{q_+}\sqrt{2},~ \theta_{q}^{-}=2^{t}-2^{q_-}\sqrt{2},~\text{with $q_++q_-=t+1$;}\\[0.1cm]
 \theta_{s}^{+}=2^{t}+2^{s_+}\sqrt{2},~ \theta_{s}^{-}=2^{t}-2^{s_-}\sqrt{2},~\text{with $s_++s_-=t+1$.}
 \end{align*}
By Lemma \ref{Coutinho} (c), we know that $g\mid \frac{\theta_{q}^{\pm}-\theta_{s}^{\pm}}{\sqrt{2}}$ and $\frac{\theta_{q}^{\pm}-\theta_{s}^{\pm}}{\sqrt{2}g}$
are odd. Without loss of generality, we assume that $q_+>s_+$. Suppose that $g=2^{x'}y'$ with $y'$ is an odd integer. Since
\begin{align*}
\frac{\theta_{q}^{+}-\theta_{s}^{+}}{g\sqrt{2}}=\frac{2^{q_+}-2^{s_+}}{2^{x'}y'} \text{~and~}\frac{\theta_{q}^{-}-\theta_{s}^{-}}{g\sqrt{2}} =\frac{2^{s_-}-2^{q_-}}{2^{x'}y'},
 \end{align*}
are odd, we have
\begin{align*}
x'=\min\left\{q_+,s_+\right\}=s_+=\min\left\{q_-,s_-\right\}=t+1-q_+.
 \end{align*}
Thus
$$
\frac{\theta_{q}^{+}-\theta_{s}^{-}}{g\sqrt{2}}=\frac{2^{q_+}+2^{s_-}}{2^{x'}y'} =\frac{2^{q_+}+2^{q_+}}{2^{x'}y'}=\frac{2^{q_++1}}{2^{x'}y'},
$$
which can not be odd. This contradicts Lemma \ref{Coutinho} (c).

\noindent\emph{Case 1.2.} $a,b\in I(G)$.

\noindent\emph{Case 1.2.1.} All the eigenvalues in $S$ are integers.  By (\ref{F41}) and Theorem \ref{C_n}, we know that
 $$
 S\subseteq \left \{ \theta_{i}^{\pm } \right \} \cup \left \{ 2r+2\right \}, ~0\le i\le p,
 $$
and for each $\theta_{i}$, $\theta_{i}^{+}\in S$ if and only if $\theta_{i}^{-}\in S$, where $\theta_0> \theta_1>\cdots>\theta_p=0 $ are all distinct Laplacian eigenvalues of $G$.

If $2r+2\notin S$, by Theorem \ref{C_n}, we assume $\theta_{q}^{\pm}\in \Lambda^{+}_{ab,cd}$ and
 $\theta_{s}^{\pm}\in \Lambda^{-}_{ab,cd}$, since neither of $\Lambda^{\pm}_{ab,cd}$ can be empty. Similar to Case 1.1.1, we can get the contradiction.

If $2r+2\in S$, without loss of generality, assume that $2r+2\in \Lambda_{ab,cd}^{+}$. Note that $\Lambda_{ab,cd}^{-}$ can not be empty.
Then, we assume $\theta^{\pm}\in \Lambda_{ab,cd}^{-}$.
Since $\theta^{\pm}$ are both integers, by (\ref{F441}), $ \theta$ is also an integer. Similar to the discussion of Case 1.1.1, by (\ref{2p1}), assume that
$$
 \theta=2^{k}+2^{t}-1-2^{t-k}, \text{~where~}0\le k < t.
$$
By (\ref{F441}), we have
$$
 \theta^{+}=2^{t}+2^{k},~ \theta^{-}=2^{t}-2^{t-k}.
$$
According to Lemma \ref{Coutinho} (c), we have that
$$
\frac{2r+2-\theta^{\pm}}{g}=\frac{2^{t+1}-\theta^{\pm}}{2^{x''}y''}
$$
are odd. Then
$$
x''=\min\left\{k,t\right\}=\min\left\{t-k,t\right\},
$$
which implies that
$$
 k=\frac{t}{2}~ \text{and} ~\theta=2^{t}-1=r.
$$
Thus
\begin{align} \label{lambda-}
\Lambda_{ab,cd}^{-}= \left \{ r^{\pm } \right \}.
\end{align}
Recall that $2r+2\in \Lambda_{ab,cd}^{+}$.  Assume that, besides $2r+2$, $\theta_{q}^{\pm}\in \Lambda^{+}_{ab,cd}$. Similar to the discussion of Case 1.1.1, we can get the contradiction.
Thus
$$
\Lambda_{ab,cd}^{+}= \left \{2r+2 \right \},
$$
and
\begin{align} \label{nonSupport}
S= \left \{r^{\pm}, 2r+2  \right \}.
\end{align}
Recall that the sum of all eigenprojectors of $\mathcal{Q}(G)$ is equal to the identity matrix. By the definition of Laplacian eigenvalue support, we have
\begin{align}\label{eigen support}
\left\{\begin{matrix}
\left ( \sum_{\gamma\in\Lambda^{+}_{ab,cd}}F_{\gamma}+ \sum_{\gamma\in\Lambda^{-}_{ab,cd}}F_{\gamma}
\right)(\mathbf{e}_{a}^{n+m}-\mathbf{e}_{b}^{n+m})=\mathbf{e}_{a}^{n+m}-\mathbf{e}_{b}^{n+m}, \\[0.4cm]
\left ( \sum_{\gamma\in\Lambda^{+}_{ab,cd}}F_{\gamma}+ \sum_{\gamma\in\Lambda^{-}_{ab,cd}}F_{\gamma}
\right)(\mathbf{e}_{c}^{n+m}-\mathbf{e}_{d}^{n+m})=\mathbf{e}_{c}^{n+m}-\mathbf{e}_{d}^{n+m}.
\end{matrix}\right.
\end{align}
Then (\ref{eigen support}) leads to
\begin{align}
\label{GP33} \sum_{\gamma\in\Lambda^{-}_{ab,cd}}F_{\gamma}(\mathbf{e}_{a}^{n+m}-\mathbf{e}_{b}^{n+m})
=&\frac{\mathbf{e}_{a}^{n+m}-\mathbf{e}_{b}^{n+m}-\mathbf{e}_{c}^{n+m}+\mathbf{e}_{d}^{n+m}}{2}, \\
\label{gam+} \sum_{\gamma\in\Lambda^{+}_{ab,cd}}F_{\gamma}(\mathbf{e}_{a}^{n+m}-\mathbf{e}_{b}^{n+m})
=&\frac{\mathbf{e}_{a}^{n+m}-\mathbf{e}_{b}^{n+m}+\mathbf{e}_{c}^{n+m}-\mathbf{e}_{d}^{n+m}}{2}.
\end{align}
By (\ref{lambda-}) and (\ref{GP33}), we have
  \begin{align} \label{GP1}
(F_{r^{+}}+F_{r^{-}}) (\mathbf{e}_{a}^{n+m}-\mathbf{e}_{b}^{n+m})=\frac{\mathbf{e}_{a}^{n+m}-\mathbf{e}_{b}^{n+m}-\mathbf{e}_{c}^{n+m}+\mathbf{e}_{d}^{n+m}}{2}.
	\end{align}

On the other hand, according to (\ref{F441}), we have
\begin{align}\label{Fsum}
\frac{1 }{ (\theta+2-\theta^{+})^2+
(2r-\theta)}
+\frac{1 }{ (\theta+2-\theta^{-})^2+
(2r-\theta)}=\frac{1}{2r-\theta},
\end{align}
and
\begin{align}\label{Fp=0}
\frac{\theta+2-\theta^{+}    }{\left (\theta+2-\theta^{+}  \right )^2+\left
(2r-\theta \right )}
+\frac{\theta+2-\theta^{-}   }{\left (\theta+2-\theta^{-}  \right )^2+\left
(2r-\theta \right )}=0.
\end{align}
Then, by (\ref{F42}), (\ref{Fsum}) and (\ref{Fp=0}), we get
\begin{align}\label{GP77}
    (F_{r^{+}}+F_{r^{-}}) (\mathbf{e}_{a}^{n+m}-\mathbf{e}_{b}^{n+m}) =&
 \left(
	\begin{array}{cc}
\mathbf{0}\\ [0.3cm]
	   \frac{1}{r }R_{G}^{\top }F_{r}(G)R_{G}(\mathbf{e}_{a}^{m}-\mathbf{e}_{b}^{m})
	\end{array}
	\right).
\end{align}
Combining (\ref{GP1}) and (\ref{GP77}), we know that $c,d\in I (G)$ and
\begin{align}\label{Fr}
 R_{G}^{\top }F_{r}(G)R_{G}(\mathbf{e}_{a}^{m}-\mathbf{e}_{b}^{m})
 =\frac{r(\mathbf{e}_{a}^{m}-\mathbf{e}_{b}^{m}-\mathbf{e}_{c}^{m}+\mathbf{e}_{d}^{m})}{2}.
\end{align}
 Notice (\ref{nonSupport}) that $S= \left \{r^{\pm}, 2r+2  \right \}$. Thus, for each $r \neq \theta_{i}\in  \mathrm{{Spec}}_{L}(G)$, $\theta_{i}^{\pm}\notin S$. According to (\ref{F42}), $\theta_{i}^{\pm}\notin S$ implies that
\begin{align*}
     F_{\theta_{i}^{\pm}}(\mathbf{e}_{a}^{n+m}-\mathbf{e}_{b}^{n+m}) =&\frac{\left (\theta_{i}+2-\theta_{i}^{\pm}  \right )^2 }{\left (\theta_{i}+2-\theta_{i}^{\pm}  \right )^2+\left (2r-\theta_{i} \right )}
 \left(
	\begin{array}{c}
	\frac{1}{\theta_{i}+2-\theta_{i}^{\pm}} F_{\theta_{i}}(G)R_{G}(\mathbf{e}_{a}^{m}-\mathbf{e}_{b}^{m})\\ [0.3cm]
	   \frac{1}{\left (\theta_{i}+2-\theta_{i}^{\pm}\right )^{2}}R_{G}^{\top }F_{\theta_{i}}(G)R_{G}(\mathbf{e}_{a}^{m}-\mathbf{e}_{b}^{m})
	\end{array}
	\right)=\mathbf{0},
	\end{align*}
which is equivalent to
\begin{align}\label{GP4}
R_{G}^{\top }F_{\theta_{i}}(G)R_{G}(\mathbf{e}_{a}^{m}-\mathbf{e}_{b}^{m})=\mathbf{0}.
\end{align}
Recall that the sum of all eigenprojectors of $G$ is equal to the identity matrix. By (\ref{GP4}) and the fact \cite{Q-gr} that $R_{G}^{\top}R_{G}=A_{\mathcal{L}{ (G) }} +2I$, we have
\begin{align} \label{GP2}
R_{G}^{\top }F_{r}(G)R_{G}(\mathbf{e}_{a}^{m}-\mathbf{e}_{b}^{m})
 &=\sum_{\theta_{i}\in\mathrm{{Spec}}_{L}(G) }R_{G}^{\top }F_{\theta_{i}}(G)R_{G}(\mathbf{e}_{a}^{m}-\mathbf{e}_{b}^{m}) \nonumber\\
 &= \left(A_{\mathcal{L}{ (G) }} +2I\right)(\mathbf{e}_{a}^{m}-\mathbf{e}_{b}^{m}),
\end{align}
where $\mathcal{L}{ (G)} $ denotes the line graph of $G$ and $I$ is the identity matrix.
By (\ref{Fr}) and (\ref{GP2}), we have
\begin{align} \label{GP9}
 \left(A_{\mathcal{L}{ (G) }} +2I\right)(\mathbf{e}_{a}^{m}-\mathbf{e}_{b}^{m})
 =\frac{r(\mathbf{e}_{a}^{m}-\mathbf{e}_{b}^{m}-\mathbf{e}_{c}^{m}+\mathbf{e}_{d}^{m})}{2}.
\end{align}
Note that $r=2^t-1$ is odd. Then the nonzero entries of the right hand side of (\ref{GP9}) are fractions. But, all nonzero entries of the left hand side of (\ref{GP9}) are all integers. Therefore, (\ref{GP9}) can not occur, which leads to a contradiction.



\noindent\emph{Case 1.2.2.} All the eigenvalues in $S$ are quadratic integers. Similar to Case 1.1.2, we can get the contradiction.

\noindent\emph{Case 1.3.} $a\in V(G),b\in I(G)$.
According to Case 1.2 of Theorem \ref{hpest}, we know that if $\theta_{i}^{+}\in \Lambda^{+}_{ab,cd}$ or $\theta_{i}^{-}\in \Lambda^{+}_{ab,cd}$ is an integer, then $\theta_{i}^{\pm}$ are both integers. Similar to Case 1.1, if $\theta_i^{\pm}$ are both integers, then we have
$$
\theta_{i} =\frac{\left ( \delta-1 \right ) \left (\delta+1+r \right ) }{\delta} =\delta+2^{t}-1-\frac{2^{t}}{\delta}, ~(\delta \ge1).
$$
By (\ref{F441}), we get
\begin{align} \label{GP7}
 \theta_{i}^{+}=2^{t}+\delta,~\theta_{i}^{-}=2^{t}-\frac{2^{t}}{\delta}.
 \end{align}
Note that $\mathbf{e}_{a}^{m+n}-\mathbf{e}_{b}^{m+n}$ and $ \mathbf{e}_{c}^{m+n}-\mathbf{e}_{d}^{m+n}$ are Laplacian strongly cospectral in  $\mathcal{Q}(G)$.
According to (\ref{Eigen73}) and (\ref{Eigen72}), we have
$$
F_{r+2}\left( \mathbf e_{c}^{m+n}-\mathbf e_{d}^{m+n} \right )=\pm F_{r+2}\left( \mathbf e_{a}^{m+n}-\mathbf e_{b}^{m+n} \right ) \neq\mathbf{0},
$$
which implies that $c\in V(G), d\in I(G)$ or $c \in I(G), d\in V(G)$. Without loss of generality, we assume that $c\in V(G),~d\in I(G)$. In this case, $F_{r+2}\left( \mathbf e_{c}^{m+n}-\mathbf e_{d}^{m+n} \right )= F_{r+2}\left( \mathbf e_{a}^{m+n}-\mathbf e_{b}^{m+n} \right ) \neq\mathbf{0}$, which means that
 \begin{align} \label{GP6}
 0^{+}=r+2=2^{t}+1\in \Lambda^{+}_{ab,cd},
\end{align}
 and then all the eigenvalues in $S$ are integers. By Lemma \ref{Coutinho} (b), we have $\Delta=1$. By Lemma \ref{Coutinho} (c), for each $\gamma\in \Lambda^{+}_{ab,cd}$, $\frac{\gamma-0^+}{g}$ is even, which implies that all the eigenvalues in $\Lambda^{+}_{ab,cd}$ have the same parity, and then
(\ref{GP6}) tells us that all the eigenvalues in $\Lambda^{+}_{ab,cd}$ are odd. Thus, $2r+2\notin \Lambda^{+}_{ab,cd}$.
We consider the following cases:

If $\theta_{i}^{-}\in \Lambda^{+}_{ab,cd}$ is odd, by (\ref{GP7}), $\delta=2^t$ and
$\theta_{i}=2^{t+1}-2=2r$, which contradicts the fact that $G$ is an $r$-regular non-bipartite graph.

If $\theta_{i}^{+}\in \Lambda^{+}_{ab,cd}$ is odd, by (\ref{GP7}), $\delta=1$, $\theta_{i}=0$ and $\theta_{i}^{+}=r+2=2^{t}+1$. Thus
\begin{align} \label{GP5}
\Lambda^{+}_{ab,cd}=\left\{r+2\right\}.
\end{align}
Combining (\ref{gam+}) and (\ref{GP5}), we obtain that
\begin{align} \label{r2equa1212}
F_{r+2}(\mathbf{e}_{a}^{n+m}-\mathbf{e}_{b}^{n+m})
=\frac{\mathbf{e}_{a}^{n+m}-\mathbf{e}_{b}^{n+m}+\mathbf{e}_{c}^{n+m}-\mathbf{e}_{d}^{n+m}}{2}.
\end{align}
Left multiplying $(\mathbf{e}_{a}^{n+m}+\mathbf{e}_{c}^{n+m})^{\top}$ on both sides of (\ref{r2equa1212}), by (\ref{Eigen72}), we get that the equality of (\ref{r2equa1212}) holds if and only if $n=2$, which contradicts the fact that $r=2^{t}-1\ge3$.

\noindent\emph{Case 2.} $G$ is a bipartite graph.

\noindent\emph{Case 2.1.} $a,b\in V(G)$.

\noindent\emph{Case 2.1.1.} $a\in V_{1}(G),b\in V_{2}(G)$.
By Lemma \ref{eigenvalues} and Theorem \ref{C_n}, we know that
 $$
 S\subseteq \left \{ \theta_{i}^{\pm } \right \} \cup \left \{r\right \},~ 1\le i\le p,
 $$
 where $ \theta_1>\cdots>\theta_p=0 $ are all distinct Laplacian eigenvalues of $G$ except for $2r$. Note that $\mathbf{e}_{a}^{m+n}-\mathbf{e}_{b}^{m+n}$ and $ \mathbf{e}_{c}^{m+n}-\mathbf{e}_{d}^{m+n}$ are Laplacian strongly cospectral in  $\mathcal{Q}(G)$. According to (\ref{Eigen73}) and (\ref{F45}), we have
$$
F_{r}\left( \mathbf e_{c}^{m+n}-\mathbf e_{d}^{m+n} \right )=\pm F_{r}\left( \mathbf e_{a}^{m+n}-\mathbf e_{b}^{m+n} \right ) \neq\mathbf{0},
$$
which implies that $c\in V_{1}(G),d\in V_{2}(G)$ or $c\in V_{2}(G),d\in V_{1}(G)$. Without loss of generality, we assume that $c\in V_{1}(G),d\in V_{2}(G)$. In this case, $F_{r}(\mathbf{e}_{a}^{n+m}-\mathbf{e}_{b}^{n+m}) =F_{r}(\mathbf{e}_{c}^{n+m}-\mathbf{e}_{d}^{n+m})\neq\mathbf{0}$,
which implies that
$$
r=2^t-1\in \Lambda_{ab,cd}^{+},
$$
 and then all the eigenvalues in $S$ are integers. By Lemma \ref{Coutinho} (b) and (c) and the fact that $r\in \Lambda_{ab,cd}^{+}$ is odd, all the eigenvalues in $\Lambda_{ab,cd}^{+}$ are odd.
Recall Theorem \ref{C_n} that for each $\theta_{i}\neq2r$, $\theta_{i}^{+}\in \Lambda_{ab,cd}^{+}$ if and only if $\theta_{i}^{-}\in \Lambda_{ab,cd}^{+}$ .
Thus, for each $\theta_{i}\neq2r$, if $\theta_{i}^{\pm}\in \Lambda_{ab,cd}^{+}$, then $\theta_{i}^{\pm}$ are both odd. According to (\ref{GP7}), we know that there is no such $\delta$ satisfying that $\theta_{i}^{\pm}$ are both odd. Thus
\begin{align} \label{bi+}
\Lambda_{ab,cd}^{+}=\left \{ r\right \}.
\end{align}
By (\ref{gam+}) and (\ref{bi+}), we have
\begin{align}\label{r=newad11}
F_{r}(\mathbf{e}_{a}^{n+m}-\mathbf{e}_{b}^{n+m})
=\frac{\mathbf{e}_{a}^{n+m}-\mathbf{e}_{b}^{n+m}+\mathbf{e}_{c}^{n+m}-\mathbf{e}_{d}^{n+m}}{2}.
\end{align}
Left multiplying $(\mathbf{e}_{a}^{n+m}+\mathbf{e}_{c}^{n+m})^{\top}$ on both sides of (\ref{r=newad11}), by (\ref{F45}),  we get that the equality of  (\ref{r=newad11})  holds if and only if $n=4$. Note that $r=2^t-1 \ge3$. Thus, $G$ must be a $3$-regular complete graph, which contradicts the fact that $G$ is a bipartite graph.

\noindent\emph{Case 2.1.2.} $a,b\in V_{i}(G),~ i=1,2$. By Lemma \ref{eigenvalues} and Theorem \ref{C_n}, we know that
 $$
 S\subseteq \left \{ \theta_{i}^{\pm } \right \} ,~ 1\le i\le p,
 $$
 where $ \theta_1>\cdots>\theta_p=0 $ are all distinct Laplacian eigenvalues of $G$ except for $2r$. Next, we only need to consider two cases, that is, all the eigenvalues in $S$ are integers or quadratic integers respectively, whose proofs are similar to that of Cases 1.1.1 and 1.1.2. Hence we omit the details here.

\noindent\emph{Case 2.2.} $a,b\in I(G)$. By Lemma \ref{eigenvalues} and Theorem \ref{C_n}, we have
\begin{align*}
S\subseteq \left \{ \theta_{i}^{\pm } \right \}\cup \left \{2r+2 \right \},~ 1\le i\le p,
\end{align*}
and for each $\theta_{i}\neq2r$, $\theta_{i}^{+}\in S$ if and only if $\theta_{i}^{-}\in S$, where $ \theta_1>\cdots>\theta_p=0 $ are all distinct Laplacian eigenvalues of $G$ except for $2r$.

\noindent\emph{Case 2.2.1.}
All the eigenvalues in $S$ are integers. Similar to Case 1.2.1, we can get (\ref{nonSupport}) and (\ref{Fr}). Note that $2r\in \mathrm{{Spec}}_{L}(G)$ and
\begin{align}\label{F2rG}
  F_{2r}(G) =\frac{1}{n}
\begin{pmatrix}
 J_{\left | V_1 \right |} & -J_{\left | V_1 \right |\times\left | V_2 \right |} \\[0.2cm]
 - J_{\left | V_2 \right |\times\left | V_1 \right |} & J_{\left | V_2 \right |}
\end{pmatrix}.
	\end{align}
By (\ref{F2rG}), one can easily verify that
$$
R_{G}^{\top }F_{2r}(G)R_{G}(\mathbf{e}_{a}^{m}-\mathbf{e}_{b}^{m})=\mathbf{0}.
$$
Thus, for each $r \neq \theta_{i}\in  \mathrm{{Spec}}_{L}(G)$, we have (\ref{GP4}). Refining the proof of Case 1.2.1, we can get the  contradiction.

\noindent\emph{Case 2.2.2.}
All the eigenvalues in $S$ are quadratic integers. The proof is similar to that of Case 1.1.2, and hence we omit the details here.

\noindent\emph{Case 2.3.} $a\in V(G),b\in I(G)$. Similar to Case 1.3, in this case, we also have (\ref{GP7}), (\ref{GP6}) and all the eigenvalues in $\Lambda^{+}_{ab,cd}$ are odd with the assumption that $c\in V(G),~d\in I(G)$.  Then $2r+2\notin \Lambda^{+}_{ab,cd}$. Suppose that $\Lambda^{+}_{ab,cd}=\left\{r+2\right\}$. By (\ref{gam+}), we also have (\ref{r2equa1212}). By (\ref{Eigen72}), one can easily verify that (\ref{r2equa1212}) can not occur. Thus, there exits at least another eigenvalue ($\theta_{i}^{+}$ or $\theta_{i}^{-}$) in $\Lambda^{+}_{ab,cd}$ besides $r+2$.
Recall Case 1.3 that if $\theta_{i}^{+}\in \Lambda^{+}_{ab,cd}$ or $\theta_{i}^{-}\in \Lambda^{+}_{ab,cd}$ is an integer, then $\theta_{i}^{\pm}$ are both integers, which satisfy (\ref{GP7}). Consider the following cases:

If $\theta_{i}^{+}\in \Lambda^{+}_{ab,cd}$ is odd, by (\ref{GP7}), then $\delta=1$, $\theta_{i}^{+}=2^t+1=r+2$, which contradicts the fact that $\theta_{i}^{+}\neq r+2$.

If $\theta_{i}^{-}\in \Lambda^{+}_{ab,cd}$ is odd, by (\ref{GP7}), then $\delta=2^t$, $\theta_{i}^{-}=2^t-1=r$.
Thus,
$$
\Lambda^{+}_{ab,cd}= \left\{r+2,r\right\}.
$$
By (\ref{gam+}), we have
$$
(F_{r+2}+F_{r})(\mathbf{e}_{a}^{n+m}-\mathbf{e}_{b}^{n+m})
=\frac{\mathbf{e}_{a}^{n+m}-\mathbf{e}_{b}^{n+m}+\mathbf{e}_{c}^{n+m}-\mathbf{e}_{d}^{n+m}}{2}.
$$
By (\ref{F45}) and (\ref{Eigen72}), it is easy to verify that the above equation can not hold.

This completes the proof. \qed
\end{proof}

Let $n\ge3$ and $1\le k< \frac{n}{2} $. The \emph{generalized Petersen graph} \cite{Watkins1969}, denoted $GP(n,k)$, is the graph with vertex set
$\left\{u_j,v_j \mid  j\in \mathbb{Z}_n \right\}$ and edge set $\left\{u_ju_{j+1},v_jv_{j+k}, u_jv_j  \mid j\in \mathbb{Z}_n \right\}$, where $\mathbb{Z}_n $ is the cyclic group of order $n$. Since the generalized Petersen graph is a 3-regular graph, by Theorem \ref{2p}, we get the following result immediately.

\begin{cor}
Let $GP(n,k)$ denote the generalized Petersen graph. Then $\mathcal{Q}\left ( GP(n,k) \right )$ does not have Pair-LPST.
\end{cor}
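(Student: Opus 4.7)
The plan is to recognize that this corollary is a direct application of Theorem \ref{2p} to a specific family of regular graphs, and the only work required is to verify the hypotheses. First I would recall the definition of the generalized Petersen graph $GP(n,k)$ given just before the statement: each vertex $u_j$ is incident to $u_{j-1}, u_{j+1}, v_j$, and each vertex $v_j$ is incident to $v_{j-k}, v_{j+k}, u_j$. So every vertex has degree exactly $3$, meaning $GP(n,k)$ is $3$-regular.

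Next I would observe that $GP(n,k)$ is connected for all admissible $n$ and $k$ (the outer cycle $u_0 u_1 \cdots u_{n-1}$ connects all outer vertices, and each $v_j$ is joined to $u_j$), so the connectedness hypothesis of Theorem \ref{2p} is satisfied. Setting $r = 3$, we have $r+1 = 4 = 2^2$, so $r+1 = 2^t$ with $t = 2 \ge 2$, matching exactly the hypothesis of Theorem \ref{2p}.

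Therefore Theorem \ref{2p} applies directly to $G = GP(n,k)$, yielding that $\mathcal{Q}(GP(n,k))$ does not have Pair-LPST. There is no real obstacle here — the proof is essentially a one-line verification that the generalized Petersen graph falls into the scope of the preceding theorem, and the whole content of the corollary is this specialization.
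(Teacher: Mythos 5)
Your proof is correct and is exactly the paper's argument: the paper likewise notes that $GP(n,k)$ is $3$-regular so that $r+1=4=2^2$ and invokes Theorem \ref{2p} directly. Your additional verification of connectedness (via the spine edges $u_jv_j$ and the outer cycle) is a sensible explicit check of a hypothesis the paper leaves implicit.
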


In Theorems \ref{hpest} and \ref{2p}, we prove that the Q-graph of an $r$-regular graph does not have Pair-LPST when $r+1$ is prime or a power of $2$. If $r+1$ is neither prime nor a power of $2$, we believe that the method used in Theorems \ref{hpest} and \ref{2p} will still work, although the proof could become more complicated. In particular, for a given graph, the method will be more useful. As an example, we prove that the Q-graph of a complete graph does not have Pair-LPST in the rest of this section.


\begin{lemma}\label{eigenKn} Let $K_{n}$ be a complete graph with $n\ge4$ vertices and $m$ edges. Then
\begin{itemize}
\item[\rm (a)]  $2n$ is a Laplacian eigenvalue of $\mathcal{Q} ( K_{n}   ) $, and its corresponding eigenprojector is
\begin{align}\label{exp1}
F_{2n}=\begin{pmatrix}
\mathbf{0}&\mathbf{0} \\[0.2cm]
 \mathbf{0} & I_{m}-\left ( \frac{1}{n-2}\left ( A_{\mathcal{L}(K_{n})}+2I_{m}  \right )+\left ( \frac{2}{n(n-1)} -\frac{4}{n( n-2 ) }
 \right ) J_{m}   \right )
\end{pmatrix}.
\end{align}
\item[\rm (b)]
$0^{\pm}$ are Laplacian eigenvalues of $\mathcal{Q}   ( K_{n}  )$, and their  corresponding eigenvectors are
              \begin{align}\label{Km3434}
     F_{0^{\pm}} =&\frac{\left (2-0^{\pm}  \right )^2 }{\left (2-0^{\pm}  \right )^2+2n-2 }
 \left(
	\begin{array}{cc}
	\frac{1}{n}J_{n}& \frac{2}{n(2-0^{\pm})}J_{n\times m}\\ [0.3cm]
	\frac{2}{n(2-0^{\pm})}J_{m\times n}  &    \frac{4}{n\left (2-0^{\pm}\right )^{2}}J_{m}
	\end{array}
	\right).
	\end{align}
\item[\rm (c)]
$n^{\pm}$ are Laplacian eigenvalues of $\mathcal{Q}  ( K_{n}  )$, and their  corresponding eigenvectors are
              \begin{align}\label{k2}
     F_{n^{\pm}} =&\frac{\left (n+2-n^{\pm}  \right )^2 }{\left (n+2-n^{\pm}  \right )^2+n-2 }
 \left(
	\begin{array}{cc}
	F_{n}(K_{n}) &\frac{1}{n+2-n^{\pm}} F_{n}(K_{n})R_{K_{n}}\\ [0.3cm]
	\frac{1}{n+2-n^{\pm}} \left (  F_{n}(K_{n})R_{K_{n}} \right )^{\top }  &    \frac{1}{\left (n+2-n^{\pm}\right )^{2}}R_{K_{n}}^{\top }F_{n}(K_{n})R_{K_{n}}
	\end{array}
	\right),
	\end{align}
where
\begin{align}\label{Kn5}
F_{n}(K_{n})={I}_{n}-\frac{1}{n}J_{n}.
\end{align}
 \end{itemize}
Thus, the spectral decomposition of $L_{\mathcal{Q} \left ( K_{n} \right )}$   is written as
$$
   L_{\mathcal{Q} \left ( K_{n} \right )}=F_{0^+}+F_{0^-}+F_{n^+}+F_{n^-}+2nF_{2n}.
$$
 \end{lemma}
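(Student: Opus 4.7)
The plan is to derive the statement as a direct specialization of Lemma \ref{eigenvalues1} to $G=K_n$, using the well known Laplacian spectral data of the complete graph and then doing one extra computation for the projector onto the nullspace of the incidence matrix. Since $K_n$ (with $n\ge 3$) is non-bipartite and $(n-1)$-regular with $m=\binom{n}{2}$ edges, the parameter $r=n-1$ gives $2r+2=2n$, and the only distinct Laplacian eigenvalues of $K_n$ are $0$ and $n$, with eigenprojectors $F_0(K_n)=\tfrac{1}{n}J_n$ and $F_n(K_n)=I_n-\tfrac{1}{n}J_n$. Consequently Lemma \ref{eigenvalues1} immediately produces $0^{\pm}, n^{\pm}, 2n$ as the Laplacian eigenvalues of $\mathcal{Q}(K_n)$, which accounts for all $n+m$ eigenvalues and yields the stated spectral decomposition.

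For part (c), I would simply substitute $\theta=n$, $r=n-1$ (so $2r-\theta=n-2$) and $F_n(K_n)=I_n-\tfrac{1}{n}J_n$ into formula (\ref{F42}); no simplification of the off-diagonal $F_{n}(K_n)R_{K_n}$ term is needed, and (\ref{k2}) and (\ref{Kn5}) fall out. For part (b), I would substitute $\theta=0$, $r=n-1$ (so $2r-\theta=2n-2$) and $F_0(K_n)=\tfrac{1}{n}J_n$ into (\ref{F42}); the needed simplifications are $J_nR_{K_n}=2J_{n\times m}$ (since each column of the incidence matrix sums to $2$) and hence $R_{K_n}^{\top}J_nR_{K_n}=4J_m$. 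Plugging these in converts the three nontrivial blocks of the projector into scalar multiples of $J_n$, $J_{n\times m}$, $J_{m\times n}$ and $J_m$, matching (\ref{Km3434}) exactly.

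The one genuinely new computation is part (a). The formula (\ref{F41}) says $F_{2n}$ has all zeros in the $V(K_n)$ block and equals, in the $I(K_n)$ block, the orthogonal projector $P_{\ker R_{K_n}}=I_m-P_{\mathrm{row}(R_{K_n})}$ onto the nullspace of $R_{K_n}$, independently of which orthogonal basis of solutions is chosen. I would compute
\[
P_{\mathrm{row}(R_{K_n})}=R_{K_n}^{\top}\bigl(R_{K_n}R_{K_n}^{\top}\bigr)^{-1}R_{K_n},
\]
using the standard identity $R_GR_G^{\top}=A_G+D_G$, which for $K_n$ gives $R_{K_n}R_{K_n}^{\top}=J_n+(n-2)I_n$. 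Sherman--Morrison then yields
\[
\bigl(R_{K_n}R_{K_n}^{\top}\bigr)^{-1}=\frac{1}{n-2}I_n-\frac{1}{(n-2)(2n-2)}J_n.
\]
Combining this with $R_{K_n}^{\top}R_{K_n}=A_{\mathcal{L}(K_n)}+2I_m$ and $R_{K_n}^{\top}J_nR_{K_n}=4J_m$ gives
\[
P_{\mathrm{row}(R_{K_n})}=\frac{1}{n-2}\bigl(A_{\mathcal{L}(K_n)}+2I_m\bigr)-\frac{2}{(n-1)(n-2)}J_m.
\]
Subtracting from $I_m$ and using the elementary identity $\tfrac{4}{n(n-2)}-\tfrac{2}{n(n-1)}=\tfrac{2}{(n-1)(n-2)}$ recasts the coefficient of $J_m$ in the form $\tfrac{2}{n(n-1)}-\tfrac{4}{n(n-2)}$ that appears in (\ref{exp1}).

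The main obstacle is the bookkeeping of the scalar coefficients in part (a); the only nontrivial algebraic reduction is the partial-fraction identity above, and the only input beyond standard incidence-matrix identities is the explicit inverse of $J_n+(n-2)I_n$. As a sanity check one can verify that $F_{0^+}+F_{0^-}+F_{n^+}+F_{n^-}+F_{2n}=I_{n+m}$, which forces the $J_m$ coefficient to take exactly the stated value and thus confirms (\ref{exp1}); this completes the spectral decomposition.
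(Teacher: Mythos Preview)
Your argument is correct. For parts (b) and (c) you proceed exactly as the paper does: identify the two Laplacian eigenvalues $0$ and $n$ of $K_n$, write down $F_0(K_n)=\tfrac{1}{n}J_n$ and $F_n(K_n)=I_n-\tfrac{1}{n}J_n$, and substitute into the general formula (\ref{F42}) from Lemma~\ref{eigenvalues1}.

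For part (a), however, your route differs from the paper's. You compute the bottom-right block of $F_{2n}$ directly as the orthogonal projector onto $\ker R_{K_n}$, writing it as $I_m-R_{K_n}^{\top}(R_{K_n}R_{K_n}^{\top})^{-1}R_{K_n}$ and inverting $R_{K_n}R_{K_n}^{\top}=J_n+(n-2)I_n$ via Sherman--Morrison. The paper instead never inverts a matrix: it first establishes closed forms for the sums $F_{\theta^{+}}+F_{\theta^{-}}$ (equations (\ref{Fsum}), (\ref{Fp=0}), (\ref{Fp=1}), (\ref{Fp+})) by exploiting the symmetry of the two roots $\theta^{\pm}$, obtains $F_{0^{+}}+F_{0^{-}}$ and $F_{n^{+}}+F_{n^{-}}$ explicitly, and then recovers $F_{2n}$ from the identity $F_{2n}=I-(F_{0^{+}}+F_{0^{-}})-(F_{n^{+}}+F_{n^{-}})$. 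Your approach is more self-contained for this lemma alone and makes transparent that the $I(K_n)$ block is precisely the kernel projector; the paper's approach avoids the matrix inversion and reuses the summation identities (\ref{Fsum})--(\ref{Fp=1}) that are also needed elsewhere in the paper (e.g.\ in the proof of Theorem~\ref{2p}). Both arrive at the same expression after the partial-fraction identity you noted.
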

\begin{proof}
The Laplacian eigenvalues of $K_{n}$ are $0$ with multiplicity $1$ and $n$ with multiplicity $n-1$.
One can easily verify that the eigenprojector corresponding of $0$ is
\begin{align}\label{Kn0}
 F_{0}(K_{n})=\frac{1}{n}J_{n}.
\end{align}
Recall that the sum of all eigenprojectors of $K_{n}$ is equal to the identity matrix. Then by (\ref{Kn0}), we have (\ref{Kn5}).
Substituting (\ref{Kn5}), (\ref{Kn0}) into (\ref{F42}), we obtain (b) and (c).

By (\ref{F441}), we have
\begin{align}\label{Fp=1}
\frac{\left (\theta+2-\theta^{+}  \right )^2 }{\left (\theta+2-\theta^{+}  \right )^2+\left
(2r-\theta \right )}
+\frac{\left (\theta+2-\theta^{-}  \right )^2 }{\left (\theta+2-\theta^{-}  \right )^2+\left
(2r-\theta \right )}=1.
\end{align}
By (\ref{F42}), (\ref{Fsum}), (\ref{Fp=0}) and (\ref{Fp=1}), we have
\begin{align}\label{Fp+}
F_{\theta^{+}}+F_{\theta^{-}}=\begin{pmatrix}
 F_{\theta}&\mathbf{0} \\[0.2cm]
 \mathbf{0} &\frac{1}{2r-\theta} R_{G}^{\top}F_{\theta}R_{G}
\end{pmatrix}.
\end{align}
Note that $K_{n}$ is a $(n-1)$-regular graph. Together with (\ref{Kn0}) and (\ref{Fp+}), we have
\begin{align}\label{Kn0sum}
F_{0^{+}}+F_{0^{-}}=\begin{pmatrix}
 \frac{1}{n}J_{n}&\mathbf{0} \\[0.2cm]
 \mathbf{0} &\frac{2}{n(n-1)}J_{m}
\end{pmatrix}.
\end{align}
By (\ref{Kn5}) and (\ref{Fp+}), we have
\begin{align}\label{Kn4}
F_{n^{+}}+F_{n^{-}}=\begin{pmatrix}
 {I}_{n}-\frac{1}{n}J_{n}&\mathbf{0} \\[0.2cm]
 \mathbf{0} &\frac{1}{n-2} R_{K_{n}}^{\top }\left ( {I}_{n}-\frac{1}{n}J_{n} \right )R_{K_{n}}
\end{pmatrix}.
\end{align}
Recall that the sum of all eigenprojectors of $\mathcal{Q} (K_{n}  )$ is equal to the identity matrix. Thus
\begin{align}\label{Knsum}
F_{2n}=I-( F_{0^{+}}+ F_{0^{-}})-( F_{n^{+}}+ F_{n^{-}}).
\end{align}
By (\ref{Kn0sum}), (\ref{Kn4}), (\ref{Knsum}) and the fact \cite{Q-gr} that $R_{G}^{\top}R_{G}=A_{\mathcal{L}{ (G) }} +2I$, we have (\ref{exp1}). This completes the proof. \qed
\end{proof}

In the following, we prove that $\mathcal{Q} (K_{n} )$ does not have Pair-LPST.

\begin{example}\label{Example2}
{\em Let $K_{n}$ be a complete graph with $n\ge4$ vertices and $m$ edges. Then $\mathcal{Q}  (K_{n} )$ does not have Pair-LPST.}
\end{example}

\begin{proof}
We prove the result by contradiction.  Suppose that $\mathbf e_{a}^{m+n}-\mathbf e_{b}^{m+n} $ and $\mathbf e_{c}^{m+n}-\mathbf e_{d}^{m+n}$
 are two distinct pair states of $\mathcal{Q}  (K_{n}  )$, and $\mathcal{Q}(K_n)$  has Pair-LPST between $\mathbf e_{a}^{m+n}-\mathbf e_{b}^{m+n}$ and $\mathbf e_{c}^{m+n}-\mathbf e_{d}^{m+n}$. Let
$$
S=\mathrm{{supp}}_{L_{\mathcal{Q}(K_{n})}}\left( \mathbf e_{a}^{m+n}-\mathbf e_{b}^{m+n} \right ).
$$
By Lemma \ref{eigenKn}, we have
$$
S\subseteq \left \{0^\pm, n^{\pm}, 2n\right \}.
$$
Consider the following cases.

\noindent\emph{Case 1.} $a, b \in V(K_{n})$.
Since $K_{n}$ is a $(n-1)$-regular graph, by (\ref{Km3434}), we have
\begin{align}\label{Kn0+}
F_{0^+}(\mathbf e_{a}^{m+n}-\mathbf e_{b}^{m+n})=\mathbf{0}.
\end{align}
Then $0^{+}\notin S$. By Theorem \ref{C_n}, we know that $0^{\pm}\notin S$. By (\ref{exp1}), we have
$$
F_{2n}(\mathbf e_{a}^{m+n}-\mathbf e_{b}^{m+n})=\mathbf{0}.
$$
Then $2n\notin S$. Thus,
$$
S= \left \{n^{\pm}\right \}.
$$
 By Theorem \ref{C_n}, we have
$$
n^{\pm}\in\Lambda^{+}_{ab,cd} ~~\text{or}~~n^{\pm}\in\Lambda^{-}_{ab,cd},
$$
which means that one of $\Lambda^{\pm}_{ab,cd}$ is empty, a contradiction.

\noindent\emph{Case 2.} $a,b\in I ( K_{n} ) $.
Similar to Case 1, we also have $0^{\pm}\notin S$. Then
 $$
 S\subseteq \left \{n^{\pm},2n\right \}.
 $$
 Recall Theorem \ref{C_n} that $n^{+}\in S$ if and only if $n^{-}\in S$. Consider the following cases.

\noindent\emph{Case 2.1.} $n^{\pm}\in S$. By Theorem \ref{C_n}, we have either $n^{\pm}\in \Lambda_{ab,cd}^{+}$ or $n^{\pm}\in \Lambda_{ab,cd}^{-}$. Then by (\ref{F441}), we have
  $$
 \frac{n^{+}-{n}^{-}}{g\sqrt{\Delta}}=\frac{\sqrt{4n+1}}{g\sqrt{\Delta}},
 $$
which can not be even, since $4n+1$ is odd. This contradicts Lemma \ref{Coutinho} (c).

\noindent\emph{Case 2.2.}  $n^{\pm}\notin S$. Note that $S$ can not be empty. Thus $S=\left \{2n\right \}$. By Lemma \ref{support}, $\mathcal{Q}\left ( K_n \right ) $ does not have Pair-LPST between
 $\mathbf e_{a}^{m+n}-\mathbf e_{b}^{m+n}$ and $\mathbf e_{c}^{m+n}-\mathbf e_{d}^{m+n}$.

\noindent\emph{Case 3.} $a\in V ( K_{n} ),b\in I ( K_{n} ) $.
Note that $\mathbf e_{a}^{m+n}-\mathbf e_{b}^{m+n}$ and $\mathbf e_{c}^{m+n}-\mathbf e_{d}^{m+n}$ are Laplacian strongly cospectral.
By (\ref{Km3434}), we have
\begin{align}\label{0PlusInS11}
 F_{0^{+}}( \mathbf e_{a}^{m+n}-\mathbf e_{b}^{m+n})=&\pm F_{0^{+}}\left( \mathbf e_{c}^{m+n}-\mathbf e_{d}^{m+n} \right )\neq \mathbf{0},
\end{align}
 which implies that $c\in V(G), d\in I(G)$ or $c \in I(G), d\in V(G)$. Without loss of generality, we assume that
$c\in V ( K_{n}), d\in I ( K_{n} )$.

\noindent\emph{Case 3.1.} If $F_{n^{-}}( \mathbf e_{a}^{m+n}-\mathbf e_{b}^{m+n})=F_{n^{-}}( \mathbf e_{c}^{m+n}-\mathbf e_{d}^{m+n})$,
by (\ref{k2}), we have
\begin{align}\label{NPM-}
F_{n}(K_{n}) (\mathbf e_{a}^{n}- \mathbf e_{c}^{n})= \frac{1}{n+2-n^{-}} F_{n}(K_{n})R_{K_{n}}(\mathbf e_{b}^{m}-\mathbf e_{d}^{m}).
 \end{align}
Plugging (\ref{Kn5}) into (\ref{NPM-}), we have
\begin{align}\label{N-in+}
\mathbf e_{a}^{n}- \mathbf e_{c}^{n}= \frac{1}{n+2-n^{-}}R_{K_{n}}(\mathbf e_{b}^{m}-\mathbf e_{d}^{m}).
 \end{align}
Note that $n\ge 4$. By (\ref{F441}),
$$
n+2-n^{-}=\frac{3+\sqrt{4n+1}}{2}\ge\frac{3+\sqrt{17}}{2}.
$$
Thus (\ref{N-in+}) can not occur, a contradiction.

\noindent\emph{Case 3.2.} If $F_{n^{-}}( \mathbf e_{a}^{m+n}-\mathbf e_{b}^{m+n})=-F_{n^{-}}( \mathbf e_{c}^{m+n}-\mathbf e_{d}^{m+n})$, by (\ref{k2}) and (\ref{Kn5}), we have
\begin{align}\label{N-in-1}
n(\mathbf e_{a}^{n}+ \mathbf e_{c}^{n})-\frac{n}{n+2-n^-}R_{K_{n}}(\mathbf e_{b}^{m}+\mathbf e_{d}^{m})=\left(2-\frac{4}{n+2-n^-} \right )J_{n\times 1}.
 \end{align}
Note (\ref{0PlusInS11}) that $0^+=n+1\in S$. By Lemma \ref{Coutinho} (b),  all the eigenvalues in $S$ should be integers. Note also that $n^{-}\in S$. Otherwise, if  $n^{-}\notin S$, we have $F_{n^{-}}( \mathbf e_{a}^{m+n}-\mathbf e_{b}^{m+n})=F_{n^{-}}( \mathbf e_{c}^{m+n}-\mathbf e_{d}^{m+n})=0$. By Case 3.1, we can get the contradiction. Thus, $n^{-}$ is an integer. By (\ref{F441}), $\sqrt{4n+1}$ must be an integer. Recall that $n\ge4$. Then $n=6,12,20,30,\ldots$.

If $n=6$, by (\ref{N-in-1}), we have
$$
6\left(\mathbf e_{a}^{6}+ \mathbf e_{c}^{6}\right)-\frac{3}{2}R_{K_{6}}\left(\mathbf e_{b}^{15}+\mathbf e_{d}^{15}\right)=J_{6\times 1}.
$$
It is easy to verify that the above equation can not occur.

If $n=12,20,30,\ldots$, then there exits a vertex $v$ ($v\neq a,c$) in $K_n$ satisfying that
 $$
(\mathbf e_{v}^{n})^{\top}R_{K_{n}}(\mathbf e_{b}^{m}+\mathbf e_{d}^{m})=0.
$$
Left multiplying $(\mathbf e_{v}^{m })^{\top}$ on both sides of (\ref{N-in-1}), we have
$$
0=2-\frac{4}{n+2-n^-},
$$
which can not occur.

Thus,
$$
F_{n^{-}}( \mathbf e_{a}^{m+n}-\mathbf e_{b}^{m+n})\neq \pm \left(F_{n^{-}}( \mathbf e_{c}^{m+n}-\mathbf e_{d}^{m+n})\right),
$$
and then $\mathbf e_{a}^{m+n}-\mathbf e_{b}^{m+n}$ and $\mathbf e_{c}^{m+n}-\mathbf e_{d}^{m+n}$ are not Laplacian strongly cospectral, a contradiction.

Therefore, $\mathcal{Q}  (K_{n} )$ does not have Pair-LPST. This completes the proof. \qed
\end{proof}

\section{Pair-LPGST in Q-graph}
\label{Sec:LPST3-3}
In this section, we show that Q-graph can have Pair-LPGST. Before proceeding, we first give the following result.
\begin{lemma}\label{lemma-pgest}
Let $G$ be an $r$-regular connected graph with $n$ vertices, $m$ edges and $r\ge 2$. Let $a,b,c,d$ be four arbitrary vertices of $G$.
 \begin{itemize}
  \item[\rm (a)]  If $G$ is a non-bipartite graph, then
  \begin{equation}
  \begin{aligned}\label{PG1}
&\frac{1}{2}\left(\mathbf e_{a}^{m+n}-\mathbf e_{b}^{m+n}\right)^\top  \exp(-\mathrm{i}tL_{ \mathcal{Q} \left ( G \right )}) \left(\mathbf e_{c}^{m+n}-\mathbf
e_{d}^{m+n}\right)\\
=&\frac{1}{2}e^{-\mathrm{i}t(r+2)/2}
\sum_{\theta\in \mathrm{{Spec}}_{L}(G)} e^{-\mathrm{i}t\theta /2}\left(\mathbf e_{a}^{n}-\mathbf e_{b}^{n}\right)^\top F_{\theta}(G)\left(\mathbf e_{c}^{n}-\mathbf
e_{d}^{n}\right) \\
&\cdot\left(\cos \left( \frac{\Delta_{\theta}t}{2} \right)+\mathrm{i}\frac{\theta+2-r}{\Delta_{\theta}}\sin\left( \frac{\Delta_{\theta}t}{2}\right )\right);
  \end{aligned}
  \end{equation}
  \item[\rm (b)] If $G$ is a bipartite graph, then
\begin{equation}
  \begin{aligned}\label{PG2}
&\frac{1}{2}\left(\mathbf e_{a}^{m+n}-\mathbf e_{b}^{m+n}\right)^\top   \exp(-\mathrm{i}tL_{ \mathcal{Q} \left ( G \right )}) \left(\mathbf e_{c}^{m+n}-\mathbf
e_{d}^{m+n}\right)\\
=&\frac{1}{2}e^{-\mathrm{i}t(r+2)/2}\sum_{\theta\in \mathrm{{Spec}}_{L}(G)\setminus \left \{ 2r \right \} } e^{-\mathrm{i}t\theta /2}\left(\mathbf e_{a}^{n}-\mathbf
e_{b}^{n}\right)^\top F_{\theta}(G)\left(\mathbf e_{c}^{n}-\mathbf e_{d}^{n}\right) \\
&\cdot\left(\cos\left(\frac{\Delta_{\theta}t}{2}\right)+\mathrm{i}\frac{\theta+2-r}{\Delta_{\theta}}\sin \left(\frac{\Delta_{\theta}t}{2}\right)\right)+
\frac{1}{2}e^{-\mathrm{i}rt}\left(\mathbf e_{a}^{n}-\mathbf e_{b}^{n}\right)^\top F_{2r}(G) \left(\mathbf e_{c}^{n}-\mathbf e_{d}^{n}\right) ,
  \end{aligned}
  \end{equation}

\end{itemize}
where $\Delta_{\theta}=\sqrt{(r+2-\theta)^{2}+4\theta}$ and $F_\theta(G)$ denotes the eigenprojector corresponding to the Laplacian eigenvalue $\theta$ of $G$.
\end{lemma}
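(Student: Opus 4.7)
The plan is to expand $\exp(-\mathrm{i}tL_{\mathcal{Q}(G)})$ via the spectral decompositions (\ref{I1}) and (\ref{I2}) and then evaluate the resulting bilinear form at the two pair states. The key observation, which drives the entire computation, is that since $a,b,c,d\in V(G)$, both pair states $\mathbf{e}_a^{m+n}-\mathbf{e}_b^{m+n}$ and $\mathbf{e}_c^{m+n}-\mathbf{e}_d^{m+n}$ are supported on the first $n$ coordinates. Consequently, only the upper-left $n\times n$ blocks of the eigenprojectors contribute: formulas (\ref{F41}) and (\ref{F43}) show that $F_{2r+2}$ is annihilated on both sides, while (\ref{F42})/(\ref{F44}) reduce each $F_{\theta^\pm}$-contribution to
$$
(\mathbf{e}_a^{m+n}-\mathbf{e}_b^{m+n})^\top F_{\theta^\pm}(\mathbf{e}_c^{m+n}-\mathbf{e}_d^{m+n})=\frac{(\theta+2-\theta^\pm)^2}{(\theta+2-\theta^\pm)^2+(2r-\theta)}(\mathbf{e}_a^n-\mathbf{e}_b^n)^\top F_\theta(G)(\mathbf{e}_c^n-\mathbf{e}_d^n).
$$

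Next I would simplify the rational coefficient. Setting $\alpha_\pm=\theta+2-\theta^\pm$ and reading off from (\ref{F441}) the identities $\theta^++\theta^-=\theta+r+2$ and $\theta^+\theta^-=(r+1)\theta$, a short calculation yields $\alpha_+\alpha_-=-(2r-\theta)$ and $\alpha_+-\alpha_-=-\Delta_\theta$, leading to the clean factorisation
$$
\alpha_\pm^2+(2r-\theta)=\alpha_\pm(\alpha_\pm-\alpha_\mp)=\mp\alpha_\pm\Delta_\theta,\qquad\text{so}\qquad\frac{\alpha_\pm^2}{\alpha_\pm^2+(2r-\theta)}=\mp\frac{\alpha_\pm}{\Delta_\theta}=\mp\frac{\theta+2-r\mp\Delta_\theta}{2\Delta_\theta}.
$$
This is the main algebraic step and, I expect, the only genuinely substantive obstacle in the argument; spotting this factorisation is what makes the subsequent trigonometric collapse work.

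Then I would pair the $\theta^+$ and $\theta^-$ contributions. Writing $\theta^\pm=\frac{\theta+r+2}{2}\pm\frac{\Delta_\theta}{2}$ and applying Euler's formula gives $e^{-\mathrm{i}t\theta^+}+e^{-\mathrm{i}t\theta^-}=2e^{-\mathrm{i}t(\theta+r+2)/2}\cos\frac{\Delta_\theta t}{2}$ and $e^{-\mathrm{i}t\theta^-}-e^{-\mathrm{i}t\theta^+}=2\mathrm{i}e^{-\mathrm{i}t(\theta+r+2)/2}\sin\frac{\Delta_\theta t}{2}$. Multiplying the previous step by $e^{-\mathrm{i}t\theta^\pm}$, summing over $\pm$, and regrouping collapses the two exponentials to
$$
\sum_{\pm}e^{-\mathrm{i}t\theta^\pm}\frac{\alpha_\pm^2}{\alpha_\pm^2+(2r-\theta)}=e^{-\mathrm{i}t(r+2)/2}e^{-\mathrm{i}t\theta/2}\left(\cos\frac{\Delta_\theta t}{2}+\mathrm{i}\frac{\theta+2-r}{\Delta_\theta}\sin\frac{\Delta_\theta t}{2}\right),
$$
which is precisely the $\theta$-indexed summand of (\ref{PG1}). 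Summing over $\theta\in\mathrm{Spec}_L(G)$ and dividing by $2$ yields (\ref{PG1}) for the non-bipartite case.

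For the bipartite case, I would additionally handle the extra eigenprojector $F_r$ of $\mathcal{Q}(G)$ given by (\ref{F45}). A direct comparison shows that its upper-left $n\times n$ block is exactly the eigenprojector $F_{2r}(G)$ of the Laplacian eigenvalue $2r$ of $G$, so $F_r$ contributes the additional summand $e^{-\mathrm{i}rt}(\mathbf{e}_a^n-\mathbf{e}_b^n)^\top F_{2r}(G)(\mathbf{e}_c^n-\mathbf{e}_d^n)$. Restricting the $\theta$-sum to $\mathrm{Spec}_L(G)\setminus\{2r\}$ (as in (\ref{I2})), adding this term and dividing by $2$ then produces (\ref{PG2}). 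Aside from the factorisation identified in the second paragraph, every step is routine substitution and bookkeeping.
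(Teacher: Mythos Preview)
Your proposal is correct and follows essentially the same route as the paper: expand $\exp(-\mathrm{i}tL_{\mathcal{Q}(G)})$ via the spectral decompositions (\ref{I1})/(\ref{I2}), use that $a,b,c,d\in V(G)$ to kill $F_{2r+2}$ and reduce each $F_{\theta^\pm}$-term to its upper-left block, then pair the $\theta^+$ and $\theta^-$ contributions via Euler's formula. Your explicit factorisation $\alpha_\pm^2+(2r-\theta)=\mp\alpha_\pm\Delta_\theta$ is just a cleaner packaging of the two product identities the paper records before invoking Euler's formula, and your treatment of the bipartite extra term $F_r$ via its upper-left block $F_{2r}(G)$ matches the paper's (omitted) handling of part~(b).
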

\begin{proof}
(a) Recall that
$$
\theta^{\pm}=\frac {1}{2}\left(r+2+\theta\pm  \Delta_{\theta}   \right),
$$
for each eigenvalue $\theta$ of $L_{G}$. According to Lemma \ref{eigenvalues1} and $( \ref{spect2})$, we have
 \begin{equation}
  \begin{aligned}\label{PG3}
  &\frac{1}{2}\left(\mathbf e_{a}^{m+n}-\mathbf e_{b}^{m+n}\right)^\top   \exp(-\mathrm{i}tL_{ \mathcal{Q} \left ( G \right )}) \left(\mathbf e_{c}^{m+n}-\mathbf
  e_{d}^{m+n}\right)\\
=&\frac{1}{2}e^{-\mathrm{i}t(r+2)/2}
\sum_{\theta\in \mathrm{{Spec}}_{L}(G)} e^{-\mathrm{i}t\theta /2}\left(\mathbf e_{a}^{n}-\mathbf e_{b}^{n}\right)^\top F_{\theta}(G)\left(\mathbf
e_{c}^{n}-\mathbf e_{d}^{n}\right) \\
 &\cdot\left(\sum_{\pm}e^{\mp \mathrm{i} \frac{\Delta_{\theta}t}{2}} \frac {\left(\theta+2-\theta^{\pm}\right)^{2}}{\left(\theta+2-\theta^{\pm}\right)^{2}+\left(2r-\theta \right)} \right).
  \end{aligned}
  \end{equation}
One can easily verify that
$$
\left(\theta+2-\theta^{+}\right)\left(\theta+2-\theta^{-}\right)=-2r+\theta,
$$
and
$$
\left(\left(\theta+2-\theta^{+}\right)^{2}+\left(2r-\theta \right) \right)\left(\left(\theta+2-\theta^{-}\right)^{2}+\left(2r-\theta \right)
\right)=\left(2r-\theta\right)\Delta_{\theta}^{2}.
$$
According to Euler's formula, we  simplify the inner summation in $\left(\ref{PG3} \right)$ as
\begin{equation} \label{PG4}
\sum_{\pm}e^{\mp \mathrm{i} \frac{\Delta_{\theta}t}{2}} \frac {\left(\theta+2-\theta^{\pm}\right)^{2}}{\left(\theta+2-\theta^{\pm}\right)^{2}+\left(2r-\theta
\right)}=\cos\left(\frac{\Delta_{\theta}t}{2}\right)+\mathrm{i}\frac{\theta+2-r}{\Delta_{\theta}}\sin\left(\frac{\Delta_{\theta}t}{2}\right).
\end{equation}
Combining (\ref{PG3}) and (\ref{PG4}), we get (\ref{PG1}).

(b) According to Lemma \ref{eigenvalues}, the proof of (\ref{PG2}) is similar to that of (\ref{PG1}), hence we omit the details.
\qed
\end{proof}

\begin{theorem}\label{TheRug-5.2}
Let $G$ be an $r$-regular graph with $n$ vertices and $m$ edges. Suppose that $r+1$ is prime and $G$ has Pair-LPST between $\mathbf e_{a}^{n}-\mathbf e_{b}^{n}$ and $\mathbf e_{c}^{n}-\mathbf e_{d}^{n}$ at time $\tau=\pi/g$.
\begin{itemize}
  \item[\rm (a)] If $G$ is a non-bipartite graph, then $\mathcal{Q} \left ( G \right )$  has Pair-LPGST between $\mathbf e_{a}^{m+n}-\mathbf e_{b}^{m+n}$ and $\mathbf e_{c}^{m+n}-\mathbf e_{d}^{m+n}$.
  \item[\rm (b)] If $G$ is a bipartite graph, let $V_1\cup V_2$ be the bipartition of $V(G)$.
      \begin{itemize}
      \item[\rm (b.1)] If $a, b \in V_{i},~i=1,2$, then $\mathcal{Q} \left ( G \right )$ has Pair-LPGST between $\mathbf e_{a}^{m+n}-\mathbf e_{b}^{m+n}$ and $\mathbf e_{c}^{m+n}-\mathbf e_{d}^{m+n}$.
     \item[\rm (b.2)]If $a \in V_{1}, b \in V_{2}$ and $2g\mid (r+2)$, then $\mathcal{Q} \left ( G \right )$  has Pair-LPGST between $\mathbf e_{a}^{m+n}-\mathbf e_{b}^{m+n}$ and $\mathbf e_{c}^{m+n}-\mathbf e_{d}^{m+n}$.
\end{itemize}
\end{itemize}
\end{theorem}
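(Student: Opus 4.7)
The plan is to exploit the explicit formula from Lemma \ref{lemma-pgest} and to set $t = 2\pi/g + 4\pi k$ for a large positive integer $k$ to be produced by Kronecker's theorem. The hypothesis that $G$ has Pair-LPST at $\tau = \pi/g$ forces, via Lemma \ref{Coutinho}(b) with $\sqrt{\Delta}=1$, every $\theta \in \mathrm{supp}_{L_G}(\mathbf{e}_a^n - \mathbf{e}_b^n)$ to be an integer, so the phase $e^{-\mathrm{i} t\theta/2} = e^{-\mathrm{i}\tau\theta} e^{-2\pi\mathrm{i} k\theta}$ equals $e^{-\mathrm{i}\tau\theta}$ exactly. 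Write $c_\theta = (\mathbf{e}_a^n - \mathbf{e}_b^n)^\top F_\theta(G)(\mathbf{e}_c^n - \mathbf{e}_d^n)$ and $M_\theta(t) = \cos(\Delta_\theta t/2) + \mathrm{i} \tfrac{\theta+2-r}{\Delta_\theta}\sin(\Delta_\theta t/2)$. Note $c_0 = 0$ automatically (since $F_0(G) = \tfrac{1}{n}J_n$ annihilates every difference $\mathbf{e}_a^n - \mathbf{e}_b^n$). In case (b.1), a direct check using the formula for $F_{2r}(G)$ gives $c_{2r} = 0$; in case (b.2), Laplacian strong cospectrality (implied by Pair-LPST of $G$) forces $c,d$ to straddle the bipartition, so that $c_{2r} \neq 0$.

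For every $\theta$ in the support with $\theta \neq 0$ (and with $\theta \neq 2r$ in the bipartite case), the argument used in Theorem \ref{hpest} with $r+1$ prime shows that $\Delta_\theta^2 = (\theta - r)^2 + 4(r+1)$ is not a perfect square, so $\Delta_\theta$ is irrational. Write $\Delta_\theta = A_\theta \sqrt{B_\theta}$ with $A_\theta$ a positive integer and $B_\theta$ square-free, and let $\mathcal B$ denote the finite set of distinct $B_\theta$'s that arise. By Corollary \ref{independent}, $\{1\} \cup \{\sqrt{B} : B \in \mathcal B\}$ is linearly independent over $\mathbb Q$, so Lemma \ref{H-W} yields, for any $\epsilon'>0$, a positive integer $k$ and integers $q_B$ with $|(k + \tfrac{1}{2g})\sqrt{B} - q_B| < \epsilon'$ for every $B \in \mathcal B$. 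Since $\Delta_\theta t/2 = 2\pi A_\theta (k + \tfrac{1}{2g})\sqrt{B_\theta}$, this angle lies within $2\pi A_\theta \epsilon'$ of an integer multiple of $2\pi$, and hence $M_\theta(t)$ can be driven arbitrarily close to $1$ simultaneously for every relevant $\theta$.

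Combining these approximations with the Pair-LPST of $G$, the amplitude in cases (a) and (b.1) reduces, up to an error that can be made arbitrarily small, to $\tfrac{1}{2} e^{-\mathrm{i} t (r+2)/2} \sum_\theta e^{-\mathrm{i}\tau\theta} c_\theta = e^{-\mathrm{i} t(r+2)/2}\chi$ with $|\chi|=1$, yielding Pair-LPGST. In case (b.2) there is the extra term $\tfrac{1}{2}e^{-\mathrm{i} rt} c_{2r}$ coming from the separate eigenvalue $r$ of $\mathcal Q(G)$, and one must check that its phase matches the $\theta=2r$ piece $\tfrac{1}{2}e^{-\mathrm{i}\tau \cdot 2r}c_{2r}$ of the Pair-LPST expansion weighted by $e^{-\mathrm{i} t(r+2)/2}$. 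A direct phase comparison with $t = 2\pi/g + 4\pi k$ reduces this compatibility to $\tau(r+2) \equiv 0 \pmod{2\pi}$, i.e.\ to $2g \mid r+2$, which is exactly the hypothesis of (b.2).

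The main obstacle is precisely this alignment of the isolated eigenvalue $2r$ in case (b.2); everywhere else the argument proceeds cleanly because the integrality of the support eigenvalues kills the phase error in $e^{-\mathrm{i} t\theta/2}$ exactly, while the irrationality of each $\Delta_\theta$ together with Corollary \ref{independent} lets Kronecker drive each $M_\theta(t)$ to $1$. A secondary technical point is bookkeeping the accumulated error: the support is finite and the constants $A_\theta$ are bounded, so for any prescribed $\epsilon > 0$ one can choose $\epsilon'$ in Lemma \ref{H-W} small enough to force $|M_\theta(t) - 1| < \epsilon$ uniformly in $\theta$, which then translates into the required Pair-LPGST bound $|\tfrac{1}{2}(\mathbf{e}_a^{m+n}-\mathbf{e}_b^{m+n})^\top e^{-\mathrm{i} t L_{\mathcal Q(G)}}(\mathbf{e}_c^{m+n}-\mathbf{e}_d^{m+n})|^2 > 1 - \epsilon$.
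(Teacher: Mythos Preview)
Your proposal is correct and follows essentially the same approach as the paper: choose $t = (4k + 2/g)\pi$, use integrality of the support eigenvalues to reduce the phases $e^{-\mathrm{i}t\theta/2}$ to $e^{-\mathrm{i}\tau\theta}$ exactly, invoke the analysis of Theorem~\ref{hpest} (with $r+1$ prime) to see each $\Delta_\theta$ is irrational, and then apply Corollary~\ref{independent} plus Kronecker to drive every $M_\theta(t)$ to $1$. The treatment of (b.2) via the phase identity $\tau(r+2)\equiv 0 \pmod{2\pi}$ is exactly the paper's use of $e^{-\mathrm{i}T(r+2)/2}=1$ when $2g\mid r+2$.
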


\begin{proof}
\noindent\emph{Case 1.} $G$ is a non-bipartite graph.
 Let $S$ be the Laplacian eigenvalue support of $\mathbf{e}_{a}^{n}-\mathbf{e}_{b}^{n}$ in $G$.
 Since $G$ has Pair-LPST between $\mathbf e_{a}^{n}-\mathbf e_{b}^{n}$ and
$\mathbf e_{c}^{n}-\mathbf e_{d}^{n}$ at time $\tau=\pi/g$, by Lemma \ref{Coutinho}, $\Delta=1$. Thus, all eigenvalues in $S$ are integers. Let $w_{\theta}$ denote the
square-free integer such that
$$
\Delta_{\theta}:=\sqrt{(r+2-\theta)^{2}+4\theta}=s_{\theta}\sqrt{w_{\theta}},
$$
for each $\theta$ in $S$ with some integer $s_{\theta}$. Note that $0\notin S$. According to Case 1.1 of Theorem \ref{hpest}, we have that
for each $\theta$ in $S$, $\Delta_{\theta}$ is irrational and $w_{\theta}>1$. By Corollary
\ref{independent}, we have
$$
\{\sqrt{w_{\theta}}:\theta\in S\}\cup\{1\},
$$
is linearly independent over $\mathbb{Q}$. By Lemma \ref{H-W}, there exist integers $l$ and $q_{\theta}$ such that
$$l\sqrt{w_{\theta}}-q_{\theta}\approx-\frac{\sqrt{w_{\theta}}}{2g}.$$
Multiplying $4s_\theta$ on both sides of the above approximation, we have
$$
(4l+\frac{2}{g} )\Delta_{\theta}\approx 4q_{\theta}s_{\theta}.
$$
Let $T=(4l+\frac{2}{g} )\pi$. Then
\begin{align}\label{PGcs}
\cos \left(\frac{\Delta_{\theta}T}{2}\right)\approx1~ \text{~~and~~} \sin \left(\frac{\Delta_{\theta}T}{2}\right)\approx0.
\end{align}
Substituting (\ref{PGcs}) into (\ref{PG1}),
\begin{align*}
&\left |\frac{1}{2}(\mathbf e_{a}^{m+n}-\mathbf e_{b}^{m+n})^\top  \exp(-\mathrm{i}TL_{ \mathcal{Q} \left ( G \right )})(\mathbf e_{c}^{m+n}-\mathbf e_{d}^{m+n}) \right |^2 \\
=&\left |\frac{1}{2}\sum_{\theta\in S}  e^{-\mathrm{i}T\theta/2} \left(\cos \frac{\Delta_{\theta}T}{2} +\mathrm{i}\frac{\theta+2-r}{\Delta_\theta} \sin \frac{\Delta_{\theta}T}{2}\right)
(\mathbf e_{a}^{n}-\mathbf e_{b}^{n})^\top F_{\theta}(G)(\mathbf e_{c}^{n}-\mathbf e_{d}^{n}) \right |^2\\
\approx&\left |\frac{1}{2}\sum_{\theta\in S}  e^{-\mathrm{i}\frac{\pi}{g} \theta }(\mathbf e_{a}^{n}-\mathbf e_{b}^{n})^\top F_{\theta}(G)(\mathbf e_{c}^{n}-\mathbf e_{d}^{n}) \right |^2\\
=&\left |\frac{1}{2}(\mathbf e_{a}^{n}-\mathbf e_{b}^{n})^\top  e^{-\mathrm{i}\frac{\pi}{g}L_{G}}(\mathbf e_{c}^{n}-\mathbf e_{d}^{n}) \right |^2.
\end{align*}
Recall that $G$ has Pair-LPST between $\mathbf e_{a}^{n}-\mathbf e_{b}^{n}$ and $\mathbf e_{c}^{n}-\mathbf e_{d}^{n}$ at time $\tau=\pi/g$. Therefore,
$$
\left|\frac{1}{2}(\mathbf e_{a}^{m+n}-\mathbf e_{b}^{m+n})^\top  \exp(-\mathrm{i}TL_{ \mathcal{Q} \left ( G \right )})(\mathbf e_{c}^{m+n}-\mathbf e_{d}^{m+n})\right|^2\approx
1,
$$
which implies that $\mathcal{Q} \left ( G \right )$  has Pair-LPGST between $\mathbf{e}_{a}^{m+n}-\mathbf{e}_{b}^{m+n}$ and $\mathbf{e}_{c}^{m+n}-\mathbf{e}_{d}^{m+n}$. This completes the proof of (a).

\noindent\emph{Case 2.} $G$ is a bipartite graph.

\noindent\emph{Case 2.1.} $a, b \in V_{i},~i=1,2$.
By (\ref{F2rG}), we have
\begin{align}\label{PGBir1}
(\mathbf e_{a}^{n}-\mathbf e_{b}^{n})^\top F_{2r}=\mathbf{0}.
\end{align}
which means that $2r\notin S$.
Let $T=(4l+\frac{2}{g} )\pi$ be as in Case 1. Substituting (\ref{PGcs}) and (\ref{PGBir1}) into (\ref{PG2}), we have
\begin{align*}
&\left |\frac{1}{2}(\mathbf e_{a}^{m+n}-\mathbf e_{b}^{m+n})^\top  \exp(-\mathrm{i}TL_{ \mathcal{Q} \left ( G \right )})(\mathbf e_{c}^{m+n}-\mathbf e_{d}^{m+n})\right |^2 \\
=&\left|\frac{1}{2}e^{-\mathrm{i}T(r+2)/2}\sum_{\theta\in \mathrm{{Spec}}_{L}(G)\setminus \left \{ 2r \right \} } e^{-\mathrm{i}T\theta /2}\left(\mathbf e_{a}^{n}-\mathbf
e_{b}^{n}\right)^\top F_{\theta}(G)\left(\mathbf e_{c}^{n}-\mathbf e_{d}^{n}\right) \right.\\&\left.
\cdot \left(\cos\left(\frac{\Delta_{\theta}T}{2}\right)+\mathrm{i}\frac{\theta+2-r}{\Delta_{\theta}}\sin \left(\frac{\Delta_{\theta}T}{2}\right)\right)\right|^{2}\nonumber\\
\approx&\left |\frac{1}{2}e^{-\mathrm{i}T(r+2)/2}\sum_{\theta\in S}  e^{-\mathrm{i}\frac{\pi}{g} \theta }(\mathbf e_{a}^{n}-\mathbf e_{b}^{n})^\top F_{\theta}(G)(\mathbf e_{c}^{n}-\mathbf e_{d}^{n}) \right |^2\\
=&\left |\frac{1}{2}\sum_{\theta\in S }  e^{-\mathrm{i}\frac{\pi}{g} \theta }(\mathbf e_{a}^{n}-\mathbf e_{b}^{n})^\top F_{\theta}(G)(\mathbf e_{c}^{n}-\mathbf e_{d}^{n}) \right |^2\\
=&\left |\frac{1}{2}(\mathbf e_{a}^{n}-\mathbf e_{b}^{n})^\top  e^{-\mathrm{i}\frac{\pi}{g}L_{G}}(\mathbf e_{c}^{n}-\mathbf e_{d}^{n}) \right |^2.
\end{align*}
Similar to Case 1, we conclude that $\mathcal{Q} \left ( G \right )$  has Pair-LPGST between $\mathbf{e}_{a}^{m+n}-\mathbf{e}_{b}^{m+n}$ and $\mathbf{e}_{c}^{m+n}-\mathbf{e}_{d}^{m+n}$, yielding (b.1).

\noindent\emph{Case 2.2.} $a \in V_{1}, b \in V_{2}$.
 According to the proof of Case 1, if $T=(4l+\frac{2}{g} )\pi$, then for each $\theta\neq 2r$ in $S$ we have (\ref{PGcs}). Recall that $2g\mid (r+2)$, we have
\begin{align}\label{PGBir}
\exp\left(\frac{-\mathrm{i}T(r+2)}{2}\right)=1.
\end{align}
Substituting (\ref{PGcs}) and (\ref{PGBir}) into (\ref{PG2}), we have
\begin{align*}
&\left |\frac{1}{2}(\mathbf e_{a}^{m+n}-\mathbf e_{b}^{m+n})^\top  \exp(-\mathrm{i}TL_{ \mathcal{Q} \left ( G \right )})(\mathbf e_{c}^{m+n}-\mathbf e_{d}^{m+n})\right |^2 \\
\approx&\left |\frac{1}{2}\sum_{\theta\in S\setminus \left \{ 2r \right \} }  e^{-\mathrm{i}\frac{\pi}{g} \theta }(\mathbf e_{a}^{n}-\mathbf e_{b}^{n})^\top F_{\theta}(G)(\mathbf e_{c}^{n}-\mathbf e_{d}^{n})+\frac{1}{2}e^{-\mathrm{i}\frac{\pi }{g}2r  }\left(\mathbf e_{a}^{n}-\mathbf e_{b}^{n}\right)^\top F_{2r}(G)\left(\mathbf e_{c}^{n}-\mathbf e_{d}^{n}\right) \right |^2\\
=&\left |\frac{1}{2}(\mathbf e_{a}^{n}-\mathbf e_{b}^{n})^\top  e^{-\mathrm{i}\frac{\pi}{g}L_{G}}(\mathbf e_{c}^{n}-\mathbf e_{d}^{n}) \right |^2.
\end{align*}
Obviously,  $\mathcal{Q} \left ( G \right )$  has Pair-LPGST between $\mathbf{e}_{a}^{m+n}-\mathbf{e}_{b}^{m+n}$ and $\mathbf{e}_{c}^{m+n}-\mathbf{e}_{d}^{m+n}$ if $G$ has
Pair-LPST from $\mathbf e_{a}^{n}-\mathbf e_{b}^{n}$ to $\mathbf e_{c}^{n}-\mathbf e_{d}^{n}$ at time $\tau=\pi/g$, leading to (b.2).
\qed
\end{proof}

\begin{example}\label{Example-4-1}
{\em Let $C_{4}$ be the cycle with $4$ vertices. Set $V(C_{4})=\left\{v_j \mid  j\in \mathbb{Z}_4 \right\}$ and $E(C_{4})=\left\{v_jv_{j+1}\mid  j\in \mathbb{Z}_4 \right\}$. It has been proved \cite[Theorem 7.3]{ChG19} that $C_{4}$ has Pair-LPST between $\mathbf e^{4}_{v_0}-\mathbf e^{4}_{v_1}$ and $\mathbf e^{4}_{v_2}-\mathbf e^{4}_{v_3}$ at time $\tau=\pi/2$. By Lemma \ref{Coutinho}, we know that $g=2$. Notice that $C_{4}$ is a $2$-regular graph, i.e., $r=2$. Then $r+1=2+1=3$ is prime and $2g \mid (r+2)$. By Theorem \ref{TheRug-5.2} (b.2), $\mathcal{Q} \left (  C_{4} \right )$ has Pair-LPGST between
$\mathbf e^{8}_{v_0}-\mathbf e^{8}_{v_1}$ and $\mathbf e^{8}_{v_2}-\mathbf e^{8}_{v_3}$. \qed
}
\end{example}

\end{document}